\newtheorem{thm}{Theorem}[section]
\newtheorem{lem}[thm]{Lemma}
\theoremstyle{definition}
\newtheorem{defn}[thm]{Definition}
\theoremstyle{remark}
\newtheorem{rem}[thm]{\bf{Remark}}
\numberwithin{equation}{section}
\newcommand{\beas}{\begin{eqnarray*}}
\newcommand{\eeas}{\end{eqnarray*}}
\newcommand{\bes} {\begin{equation*}}
\newcommand{\ees} {\end{equation*}}
\newcommand{\be} {\begin{equation}}
\newcommand{\ee} {\end{equation}}
\newcommand{\bea} {\begin{eqnarray}}
\newcommand{\eea} {\end{eqnarray}}
\newcommand{\txt} {\textmd}
\newcommand{\R}{\mathbb R}
\newcommand{\C}{\mathbb C}
\newcommand{\N}{\mathbb N}
\newcommand{\Z}{\mathbb Z}
\newcommand{\la}{\lambda}
\newcommand{\g}{\mathfrak{g}}
\begin{document}

\title[Levinson's theorem] {A theorem of Levinson for Riemannian symmetric spaces of noncompact type}

\author{Mithun Bhowmik and Swagato K. Ray}

\address{Stat-Math Unit, Indian Statistical Institute, 203 B. T. Road, Kolkata - 700108, India.}

\email{mithunbhowmik123@gmail.com, swagato@isical.ac.in}



\begin{abstract}
A classical result of N. Levinson characterizes the existence of a nonzero integrable function vanishing on a nonempty open subset of the real line in terms of the pointwise decay of its Fourier transform. We prove an analogue of this result for  Riemannian symmetric spaces of noncompact type.
\end{abstract}

\subjclass[2010]{Primary 43A85; Secondary 22E30}

\keywords{Riemannian symmetric space, Semisimple Lie group, Fourier transform, Levinson's theorem}

\maketitle

\section{Introduction}
It is a well known fact in harmonic analysis that if the Fourier transform of an integrable function on $\R$ is very rapidly decreasing then the function cannot vanish on a nonempty open subset of $\R$ unless it vanishes identically. A manifestation of this fact is as follows. Let $f\in L^1(\R)$ and $a>0$ be such that 
\be
|\mathcal F f(\xi)| \leq Ce^{-a|\xi|}, \:\:\:\: \txt{ for all }\xi \in \R,\label{introdecay}
\ee 
where
\be
\mathcal F f(\xi)=\int_{\R}f(x)e^{-ix\xi}dx,\nonumber
\ee
is the usual Fourier transform. If $f$ vanishes on a nonempty open subset of $\R$ then $f$ is identically zero. This is due to the fact that the very rapid decay of the Fourier transform extends the function as a holomorphic function in $\{z\in\C\mid |\Im z|<a\}$. 
This initial observation motivates to look for optimal decay condition on the Fourier transform $\mathcal F f$ for such a conclusion. For instance, we may ask:  if for an increasing function $\psi$ on $[1,\infty)$, the Fourier transform
$\mathcal F f$ decays faster than $e^{-\psi (|x|)}$ for large $|x|$, can $f$ vanish on a nonempty open set without being identically zero?  For example, one can take $\psi(x)=x(1+\log x)^{-1}$ which clearly imposes a slower decay on the Fourier transform compared to (\ref{introdecay}). 
The answer to the above question is in the negative and follows from certain results of Levinson proved in \cite{L2, L1}. Analogous problems have been studied by Paley-Wiener, Ingham and Hirschman (\cite{PW}, Theorem II; \cite{PW1}, Theorem XII, P.16, \cite{I}, \cite{Hi}). All these results can be grouped under the so called uncertainty principle of harmonic analysis which says that both a function and its Fourier transform cannot be sharply localized  (see \cite{FS, HJ}). In the context of the present paper, localization of the function can be interpreted as the smallness of the support and that of the Fourier transform can be interpreted in terms of its decay at infinity. 

We now state the relevant result of Levinson whose extension to Riemannian symmetric spaces of noncompact type is the main topic of this paper. 

\begin{thm}[\cite{L2}, Theorem II] \label{levoriginal}
Let $\psi:[0,\infty) \rightarrow [0,\infty)$ be an increasing function with $\lim_{r\to\infty}\psi(r)=\infty$ and set
\bes
I=\int_{1}^{\infty} \frac{\psi(\xi)}{\xi^2} d\xi.
\ees
\begin{enumerate}
\item[a)]
Suppose $f\in L^1(\R)$ and 
\be \label{levrcond}
|\mathcal F f(\xi)|\leq Ce^{-\psi(\xi)}, \:\:\:\: \txt{ for all } \xi > 1,
\ee
for some C positive. If the integral $I$ is infinite then $f$ cannot vanish on any nonempty open interval unless it is identically zero over $\R$.
\item[b)] If $I$ is finite then there exists a nonzero $f\in C_c(\R)$ satisfying the estimate (\ref{levrcond}).
\end{enumerate}
\end{thm}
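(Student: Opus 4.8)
The plan is to read (a) and (b) as dual statements about quasi-analyticity: (a) asserts that the class of $L^1$ functions satisfying \eqref{levrcond} is quasi-analytic (no nonzero member can vanish on a nonempty open set), while (b) produces a compactly supported member of this class once the Denjoy--Carleman integral $I$ converges. For (a) the strategy is to convert the hypotheses into bounds on the derivatives of $f$ and invoke the Denjoy--Carleman theorem; for (b) it is to build $f$ as an infinite convolution of scaled indicator functions, whose Fourier transform is the corresponding infinite product of $\tfrac{\sin t}{t}$ factors.

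For (a): translating, we may assume $f$ vanishes on a symmetric interval $(-2a,2a)$ with $a>0$. Fix $\varphi\in C_c^\infty((-a,a))$ with $\int_\R\varphi=1$ and set $f_\delta=f*\varphi_\delta$, $\varphi_\delta(x)=\delta^{-1}\varphi(x/\delta)$. For small $\delta>0$ the function $f_\delta$ is smooth, is not identically zero, vanishes on the nonempty interval $(-2a+\delta a,\,2a-\delta a)$, and still satisfies $|\mathcal Ff_\delta(\xi)|\le Ce^{-\psi(|\xi|)}$ (because $|\mathcal F\varphi|\le1$); moreover $\mathcal Ff_\delta$ is rapidly decreasing, since $\mathcal F\varphi$ is, so
\bes
\|f_\delta^{(m)}\|_\infty\le M_m:=\frac1{2\pi}\int_\R|\xi|^m\,|\mathcal Ff_\delta(\xi)|\,d\xi<\infty,\qquad m\ge0.
\ees
Thus $f_\delta$ belongs to the Carleman class $C\{M_m\}$ and vanishes to infinite order at each point of the interval, so by the Denjoy--Carleman theorem it suffices to show that $C\{M_m\}$ is quasi-analytic, i.e. (after the harmless log-convex regularization of $\{M_m\}$) that $\int_1^\infty r^{-2}\log T(r)\,dr=\infty$, where $T(r)=\sup_m r^m/M_m$. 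Writing $|\mathcal Ff_\delta(\xi)|\le Ce^{-\psi(|\xi|)-\omega(\delta|\xi|)}$ with $e^{-\omega}$ a non-quasi-analytic majorant of $|\mathcal F\varphi|$, one estimates the integrals $M_m$ by Laplace's method and obtains $\log T(r)\ge\psi(r)+\omega(\delta r)-O(\log r)\ge\psi(r)-O(\log r)$; hence $\int_1^\infty r^{-2}\log T(r)\,dr\ge I-O(1)=\infty$. Therefore $f_\delta\equiv0$ for every small $\delta$, and letting $\delta\to0$ gives $f\equiv0$.

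For (b): if some nonzero function in $C_c^\infty(\R)$ already obeys \eqref{levrcond} we are done, so assume not; in particular we may replace $\psi$ by $\phi(\xi):=\max\{\psi(e|\xi|),\sqrt{|\xi|}\}$, which is still increasing, satisfies $\int_1^\infty\phi(\xi)\xi^{-2}\,d\xi<\infty$, is $\ge\sqrt{|\xi|}$ (so that $e^{-\phi}\in L^1$), and yields a stronger estimate. Take the nonincreasing sequence $a_k=1/\phi^{-1}(k\log2)$; then $\sum_k a_k<\infty$ holds \emph{precisely because} $\int_1^\infty\phi(\xi)\xi^{-2}\,d\xi<\infty$. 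Put $f=g_1*g_2*\cdots$ with $g_k=\tfrac1{2a_k}\chi_{[-a_k,a_k]}$: this infinite convolution converges to a nonzero function in $C_c([-L,L])$, $L=\sum_k a_k$, with $\mathcal Ff(\xi)=\prod_{k\ge1}\frac{\sin(a_k\xi)}{a_k\xi}$. Using $|\sin(a_k\xi)/(a_k\xi)|\le\min\{1,(a_k|\xi|)^{-1}\}$,
\bes
\log|\mathcal Ff(\xi)|\le-\sum_{k:\,a_k|\xi|\ge1}\log(a_k|\xi|)=-\int_0^{|\xi|}\frac{\#\{k:a_k\ge t^{-1}\}}{t}\,dt\le-\phi(|\xi|/e)+O(1)\le-\psi(|\xi|)+O(1),
\ees
which is the required estimate.

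The main obstacle is the quasi-analyticity step in (a): extracting quasi-analyticity of $C\{M_m\}$ from $I=\infty$. Two points need care. First, $\psi$ is merely increasing, whereas Laplace's method and the cleanest form of the Denjoy--Carleman criterion want an exponent that is convex in $\log\xi$; one must therefore replace $\psi$ by a suitable regularization from below that keeps $\int_1^\infty r^{-2}\psi(r)\,dr$ divergent, which is a standard but slightly delicate argument. Second, the mollification introduces the mollifier's own decay exponent $\omega$, for which $\int_1^\infty r^{-2}\omega(r)\,dr<\infty$; but since $I=\infty$ the effective exponent $\psi(\xi)+\omega(\delta|\xi|)$ still has divergent logarithmic integral, so the Carleman class only shrinks and quasi-analyticity survives the mollification. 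In (b) the corresponding --- and entirely elementary --- point is the explicit passage from $\psi$ to $\{a_k\}$ together with the verification $\sum_k a_k<\infty$, which is, once more, exactly what $I<\infty$ provides.
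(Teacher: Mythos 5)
The paper does not prove Theorem \ref{levoriginal}: it quotes it from Levinson \cite{L2} and points to two known arguments (Levinson's reduction to a Paley--Wiener theorem, and Koosis's route via completeness of exponentials, which is what the paper itself adapts in Section $4$). Your proposal is a third route --- Denjoy--Carleman quasi-analyticity for part (a), and an infinite convolution of scaled box functions for part (b) --- and so is genuinely different from anything in the paper; it is therefore worth examining on its own merits.

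Part (b) is essentially correct and is the classical Ingham/Paley--Wiener construction. The passage from $\int_1^\infty \phi(\xi)\xi^{-2}\,d\xi<\infty$ to $\sum_k a_k<\infty$ is exactly the Stieltjes integration by parts $\int_1^\infty \phi(\xi)\xi^{-2}\,d\xi = \phi(1)+\int_1^\infty \xi^{-1}\,d\phi(\xi)$ followed by the change of variable $u=\phi(\xi)$; and the bound $\log\bigl|\prod_k \sin(a_k\xi)/(a_k\xi)\bigr|\le -\int_0^{|\xi|}t^{-1}\,\#\{k:a_k\ge t^{-1}\}\,dt$ together with $\int_{|\xi|/e}^{|\xi|}\phi(u)u^{-1}\,du\ge\phi(|\xi|/e)$ gives the required estimate. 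You do need to be a little careful with $\phi^{-1}$ since $\phi$ is only nondecreasing (take a right-continuous generalized inverse and start the sequence at a $k_0$ with $k_0\log 2\ge\phi(0+)$); the opening case distinction (``if some nonzero $C_c^\infty$ function already works\dots'') is unnecessary. Verifying continuity and compact support of the infinite convolution via uniform convergence of the partial convolutions is fine.

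Part (a) has a genuine gap. The hypothesis \eqref{levrcond} is \emph{one-sided}: decay $e^{-\psi(\xi)}$ is assumed only for $\xi>1$, and on the other side one has nothing beyond $\|\mathcal Ff\|_\infty\le\|f\|_{L^1}$. Your argument silently upgrades this to the two-sided bound $|\mathcal Ff(\xi)|\le Ce^{-\psi(|\xi|)}$, and this is where the Denjoy--Carleman mechanism is used. The upgrade is not harmless: the moments $M_m=\int_\R |\xi|^m|\mathcal Ff_\delta(\xi)|\,d\xi$ are governed by the side on which $\mathcal Ff_\delta$ decays most slowly. On $\xi<-1$ the only decay available is that of the mollifier, $|\mathcal F\varphi(\delta\xi)|\lesssim e^{-\omega(\delta|\xi|)}$, and since $\varphi\in C_c^\infty$ the weight $\omega$ necessarily satisfies $\int_1^\infty\omega(r)r^{-2}\,dr<\infty$ (otherwise $\varphi$ itself would lie in a quasi-analytic class and could not be compactly supported). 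Consequently $\log T(r)=\sup_m(m\log r-\log M_m)\lesssim\omega(\delta r)$ and $\int_1^\infty \log T(r)\,r^{-2}\,dr<\infty$: the Carleman class of $f_\delta$ is \emph{not} quasi-analytic, and the argument gives nothing. This is not a presentation issue but a structural one: the one-sided statement of Levinson really does require a complex-analytic (Phragm\'en--Lindel\"of / Paley--Wiener in a half-plane) mechanism and cannot be reached by real-variable moment estimates. Your proof does establish the easier two-sided version --- which, incidentally, is the form the paper actually uses later --- provided the log-convex regularization step you flag is carried out (this is the standard Ostrowski--Mandelbrojt passage from an increasing $\psi$ with divergent logarithmic integral to the associated function $T$); but as a proof of the theorem as stated in \cite{L2} it does not cover the one-sided hypothesis.
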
 

It is the sharpness of the condition on $\psi$ which makes the theorem interesting to us. It was later proved by Beurling \cite{Koo} that if the function satisfies the condition given in the theorem above then it cannot even vanish on a set of positive Lebesgue measure without being identically zero. However, these results of Levinson and Beurling are available only for the circle group and the real line. Coming back to Levinson's theorem, Levinson proved his theorem by reducing matters to a theorem of Paley and Wiener (\cite{PW1}, P. 16). This method of proof seems to be very special to $\R$ and is hard to push through for other spaces. A different proof of Levinson's theorem, which we find more illuminating, was obtained later. Namely, it was proved in (\cite{Koo}, Chapter VII, P. 248) that Levinson's theorem is actually related to completeness of exponential functions in certain normed linear spaces of continuous functions on $\R$. It is this approach we are going to adopt to obtain a version of Levinson's theorem for Riemannian symmetric spaces of noncompact type. Our main approximation result is Theorem \ref{phidense} which shows how Levinson's theorem extends to this setting.

A Riemannian symmetric space of noncompact type $X$ can be viewed as a quotient space $G/K$ where $G$ is a connected, noncompact, semisimple Lie group with finite center and $K$ a maximal compact subgroup of $G$. For integrable functions $f$ on $G/K$ there is an appropriate analogue of the Fourier transform denoted by $\widetilde{f}$. It is natural to ask about an extension of Levinson's theorem in terms of the Fourier transform $\widetilde{f}$ for functions defined on $X$. The following analogue of Levinson's theorem for a Riemannian symmetric space of noncompact type is the main result of this paper.
\begin{thm} \label{symthm}
Let $\psi:[0,\infty) \rightarrow [0,\infty)$ be an increasing function with $\lim_{r\to\infty}\psi(r)=\infty$ and let
\bes
I=\int_{\{\la\in\mathfrak a_+^*\mid\:\|\la\|_B \geq 1\}} \frac{\psi (\|\la\|_B)}{\|\la\|_B^{d+1}} d\la,
\ees
where $d=\text{rank}(X)$.
\begin{enumerate}
\item[(a)] Suppose $f\in L^1(X)$ and its Fourier transform $\widetilde{f}$ satisfies the estimate
\be \label{symest}
\int_{\mathfrak{a}^* \times K} |\widetilde f(\la, k)|~ e^{\psi(\|\la\|_B)}~ |{\bf c}(\la)|^{-2}d\la~dk < \infty,
\ee
where $|{\bf c}(\lambda)|^{-2}d\lambda~dk$ denotes the Plancherel measure for $L^2(X)$. If $f$ vanishes on a nonempty open set in $X$ and $I$ is infinite then $f=0$. 
\item[(b)] If $I$ is finite then there exists a nontrivial $f\in C_c^{\infty}(X)$ satisfying the estimate (\ref{symest}).
\end{enumerate}
\end{thm}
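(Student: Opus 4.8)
The plan is to follow the functional-analytic route of Koosis rather than Levinson's original reduction to Paley--Wiener. The heart of the matter is an approximation theorem (advertised as Theorem~\ref{phidense}): under the divergence hypothesis $I=\infty$, the span of the elementary spherical functions $\varphi_\lambda$ (or more precisely the functions $x\mapsto e^{(i\lambda+\rho)(A(x,k))}$ that build the Helgason--Fourier transform), restricted to a fixed nonempty open set $U\subset X$, should be dense in a suitable weighted space of continuous functions on $U$ — the weight being dictated by the growth of $e^{\psi}$ against the Plancherel density $|\mathbf c(\lambda)|^{-2}$. Granting this, part~(a) follows by duality: if $f\in L^1(X)$ vanishes on $U$ and satisfies \eqref{symest}, then the estimate \eqref{symest} is exactly what is needed to make $f$ define a bounded linear functional on that weighted space via the Fourier inversion formula $f(x)=\int_{\mathfrak a^*\times K}\widetilde f(\lambda,k)\,e^{(-i\lambda+\rho)(A(x,k))}\,|\mathbf c(\lambda)|^{-2}\,d\lambda\,dk$; this functional annihilates every $\varphi_\lambda|_U$ (that is the vanishing of $\widetilde f$'s ``moments'' seen the other way, or directly: it is $\int_U$ of something times $f$, hence zero because $f=0$ on $U$ — one must be slightly careful and instead test against the dense family), hence annihilates the whole space, hence $f=0$ on $U$; combined with real-analyticity considerations, or rather by running the same argument with $U$ replaced by any translate, one concludes $f\equiv 0$.

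For the approximation theorem itself I would localize to the real-variable situation by a radialization/reduction argument. The Fourier transform on $X$ in the $\lambda$ variable, after integrating out $K$ and using the Plancherel density, behaves for large $\|\lambda\|_B$ like a Euclidean Fourier transform on $\mathfrak a^*\cong\R^d$ with the extra polynomial weight $|\mathbf c(\lambda)|^{-2}\asymp \|\lambda\|_B^{?}$ of degree $\dim\mathfrak n - d$ roughly, which is precisely why the critical integral in Theorem~\ref{symthm} has $\|\lambda\|_B^{d+1}$ in the denominator rather than $\|\lambda\|_B^{2}$: integrating the one-dimensional Levinson condition over the sphere of radius $r$ in $\mathfrak a^*$ produces the $r^{d-1}\,dr$ Jacobian, and one more power comes from the passage between the $L^1$-type estimate \eqref{symest} and a pointwise estimate. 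So the strategy is: reduce the multidimensional weighted completeness statement to the one-dimensional Levinson--Beurling--Koosis theorem applied in the radial variable $r=\|\lambda\|_B$, using the known asymptotics of the spherical function $\varphi_\lambda(g)$ for $g$ in a fixed compact set and $\|\lambda\|$ large (Harish-Chandra's expansion), and the estimates on the $\mathbf c$-function. A logarithmic-integral (Beurling--Malliavin / Levinson) criterion then converts $I=\infty$ into non-quasianalyticity failing, i.e. into the desired density.

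For part~(b), when $I<\infty$, I would construct the example by the classical infinite-product device: choose a sequence $a_n\downarrow 0$ with $\sum a_n<\infty$ governed by $\psi$, and form on $\R$ the function whose Fourier transform is essentially $\prod_n \bigl(\frac{\sin a_n\xi}{a_n\xi}\bigr)$, which is compactly supported (support $\sum a_n$), smooth, nonzero, and whose transform decays like $e^{-\psi}$ because $\log\prod|\sin a_n\xi/a_n\xi|\lesssim -\sum_{a_n|\xi|>1}\log(a_n|\xi|)\asymp -\psi(|\xi|)$ when the $a_n$ are matched to the convergence of $I$. Then one transplants this to $X$: take $f$ to be a $K$-bi-invariant (spherical) function whose spherical Fourier transform in the variable $\lambda\in\mathfrak a^*$ is built as a product of such one-dimensional factors in the coordinates of $\lambda$ (or a single radial factor composed with $\|\lambda\|_B$), chosen compactly supported in $\lambda$-space so that by the spherical Paley--Wiener theorem $f\in C_c^\infty(X)$, and arrange the decay so that \eqref{symest} holds against the polynomially-growing Plancherel density — the finiteness of $I$ is exactly the budget that allows the $e^{\psi}$ factor to be absorbed.

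The main obstacle I expect is the approximation theorem in the non-radial, higher-rank setting: controlling the family $\{e^{(i\lambda+\rho)(A(x,k))}\}$ uniformly in $k\in K$ and $x$ in the open set, and pushing a genuinely multivariable weighted-polynomial completeness statement through, since the one-dimensional Levinson--Koosis machinery does not directly apply in $\R^d$ and one must either integrate in spherical shells (losing information about angular behaviour) or invoke a tensor-product/slicing argument that respects the $\mathbf c$-function weight; handling the $\rho$-shift and the behaviour of $A(x,k)$ as $x$ ranges over the open set, together with the real-analyticity input needed to upgrade ``vanishes on $U$'' to ``vanishes everywhere,'' is where the real work lies.
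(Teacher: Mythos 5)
Your high-level instinct is right: the paper does follow the Koosis completeness-of-exponentials route, reduces to one dimension, and uses the Abel/Radon/Paley--Wiener machinery for part (b). But your concrete setup of the duality is backwards, and in fact circular. You place the weighted space $C_\psi$ on the \emph{physical} side, as functions on the open set $U\subset X$, and propose density there of the restrictions $\varphi_\lambda|_U$; then you observe the resulting functional is ``$\int_U$ of something times $f$, hence zero because $f=0$ on $U$,'' and you concede ``one must be slightly careful.'' The problem is that $f|_U=0$ is the hypothesis, not something one can extract this way, and the weight $e^{\psi}$ together with $|\mathbf c(\lambda)|^{-2}$ lives on $\mathfrak a^*$, not on $U$. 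The paper's duality is the transpose of yours: the weighted space is $C_\psi(\mathfrak a^*)_W$ (functions \emph{of $\lambda$}), the dense family is $\Phi_L(\mathfrak a^*)=\mathrm{span}\{\lambda\mapsto\phi_\lambda(x):x\in\mathcal B(o,L)\}$, and the hypothesis \eqref{symest} is precisely what makes $g\mapsto\int_{\mathfrak a^*}\widehat f(\lambda)\,g(\lambda)\,|\mathbf c(\lambda)|^{-2}\,d\lambda$ a bounded functional on that space. Vanishing of $f$ on the ball, via the inversion formula, makes this functional kill $\Phi_L(\mathfrak a^*)$; density then forces $\widehat f=0$. Getting the sides of the duality right is the whole game here.

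Two further gaps. First, you try to run everything directly with the full Helgason--Fourier transform, i.e.\ with the $K$-dependent functions $e^{(i\lambda+\rho)A(x,k)}$, and you correctly flag in your last paragraph that controlling this family uniformly in $k$ ``is where the real work lies'' --- but you offer no way to do it. The paper sidesteps this entirely by a nontrivial reduction to the $K$-biinvariant case: translate so $f$ vanishes on $\mathcal B(o,L)$, then invoke a lemma (via the heat kernel and real analyticity of $f*h_t$) producing $g_0K\in\mathcal B(o,L/2)$ with $\mathcal S(l_{g_0}f)\neq 0$; this radialized translate still vanishes on a ball and inherits the integrability condition, so the spherical/K-biinvariant version of the argument applies. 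Without some such reduction, your plan does not close. Second, your route to the approximation theorem itself (Harish--Chandra asymptotics of $\varphi_\lambda$ plus a Beurling--Malliavin criterion) is not what the paper uses and is riskier: the paper approximates a given $f\in E_L(\mathfrak a^*)_W$ by multiplying by a dilated $\widehat\phi$ to land in the spherical Paley--Wiener space, writes the result as $\int F(x)\phi_{-\lambda}(x)\,J\,dx$, and discretizes this integral by Riemann sums (Lemma~\ref{Euclideanlemma}) to get into $\Phi_L(\mathfrak a^*)$; the one-dimensional Levinson input enters only through the Euclidean tensor-product Lemma~\ref{levlem}. Similarly, for part (b) you say ``chosen compactly supported in $\lambda$-space so that by the spherical Paley--Wiener theorem $f\in C_c^\infty(X)$,'' which has Paley--Wiener backwards ($f$ compactly supported on $X$ forces $\widehat f$ to be entire of exponential type, not compactly supported); and a radial profile $\lambda\mapsto u(\|\lambda\|_B)$ is not entire on $\mathfrak a_{\C}^*$, so the radial-factor variant does not produce a legal $\widehat f$. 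The paper's construction --- one-dimensional Levinson example, evenize, invert the Euclidean Radon transform to get a radial $h_0\in C_c^\infty(\R^d)$, then invert the Abel transform --- avoids both pitfalls.
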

As a consequence of Theorem \ref{symthm}, it is possible to prove the following result which is a natural analogue of Theorem \ref{levoriginal}.
\begin{thm}\label{boundedsymthm}
Let $\psi$ and $I$ be as in Theorem \ref{symthm}.
\begin{enumerate}
\item[a)] Let $f\in L^1(X)$ satisfy the estimate
\be \label{Linftyest}
|\widetilde{f}(\la, k)| \leq C e^{-\psi(\|\la\|_B)}, \:\:\:\: \txt{ for all } \la\in \mathfrak{a}^*, k\in K,
\ee
If $f$ vanishes on a nonempty open subset of $X$ and $I$ is infinite, then $f=0$. 
\item[b)] If $I$ is finite then there exists a nontrivial $f\in C_c^{\infty}(K \backslash G/K)$ satisfying (\ref{Linftyest}).
\end{enumerate}
\end{thm}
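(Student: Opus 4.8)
The plan is to derive Theorem~\ref{boundedsymthm} from Theorem~\ref{symthm}, the only extra input being that the Plancherel density grows at most polynomially on $\mathfrak a^*$, say $|{\bf c}(\lambda)|^{-2}\le C(1+\|\lambda\|_B)^{\kappa}$.

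\emph{Part (a).} Let $f\in L^1(X)$ satisfy (\ref{Linftyest}), vanish on a nonempty open set $U$, and assume $I=\infty$; fix a geodesic ball $\overline{B(x_0,\delta)}\subseteq U$. For $0<\varepsilon<\delta$ pick a nonnegative $\phi_\varepsilon\in C_c^\infty(K\backslash G/K)$ with $\int_X\phi_\varepsilon=1$, supported in the $\varepsilon$-ball about $eK$, and consider $f*\phi_\varepsilon$. I would use three standard facts: (i) $f*\phi_\varepsilon\in L^1(X)$ and $f*\phi_\varepsilon\to f$ in $L^1(X)$ as $\varepsilon\to0$; (ii) since $\phi_\varepsilon$ is supported in an $\varepsilon$-ball and $G$ acts by isometries, $(f*\phi_\varepsilon)(y)$ depends only on $f$ restricted to $\overline{B(y,\varepsilon)}$, so $f*\phi_\varepsilon$ vanishes on the nonempty open set $B(x_0,\delta-\varepsilon)$; (iii) the convolution identity $\widetilde{f*\phi_\varepsilon}(\lambda,k)=\widetilde f(\lambda,k)\,\widehat{\phi_\varepsilon}(\lambda)$, where $\widehat{\phi_\varepsilon}$ is the spherical (Harish-Chandra) transform of $\phi_\varepsilon$. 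As $\phi_\varepsilon$ is smooth and compactly supported, the Paley--Wiener theorem for the spherical transform gives $|\widehat{\phi_\varepsilon}(\lambda)|\le C_{N,\varepsilon}(1+\|\lambda\|_B)^{-N}$ for every $N$, so combining with (\ref{Linftyest}) and the polynomial bound on $|{\bf c}(\lambda)|^{-2}$, and choosing $N>\kappa+d$,
\bes
\int_{\mathfrak a^*\times K}\bigl|\widetilde{f*\phi_\varepsilon}(\lambda,k)\bigr|\,e^{\psi(\|\lambda\|_B)}\,|{\bf c}(\lambda)|^{-2}\,d\lambda\,dk\ \le\ C'\!\int_{\mathfrak a^*}(1+\|\lambda\|_B)^{\kappa-N}\,d\lambda\ <\ \infty;
\ees
the decay in (\ref{Linftyest}) exactly cancels the exponential weight in (\ref{symest}). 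Hence $f*\phi_\varepsilon$ satisfies the hypotheses of Theorem~\ref{symthm}(a) with the \emph{same} $\psi$ (so the same, divergent, $I$), whence $f*\phi_\varepsilon=0$ for every small $\varepsilon$; letting $\varepsilon\to0$ gives $f=0$.

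\emph{Part (b).} Now $I<\infty$. For a bi-$K$-invariant function the Helgason transform $\widetilde f(\lambda,k)=\widehat f(\lambda)$ is independent of $k$, so (\ref{Linftyest}) is equivalent to $|\widehat f(\lambda)|\le Ce^{-\psi(\|\lambda\|_B)}$ on $\mathfrak a^*$. I would build $f$ by transplanting the classical Levinson/Paley--Wiener construction to the Euclidean space $\mathfrak a\cong\R^d$ through the Abel transform. First construct a nontrivial radial (hence $W$-invariant) $F\in C_c^\infty(\mathfrak a)$ with $|\widehat F(\lambda)|\le Ce^{-\psi(\|\lambda\|_B)}$, where $\widehat F$ is now the Euclidean Fourier transform: take a nonnegative radial $\chi\in C_c^\infty(\mathfrak a)$ supported in the unit ball with $\int_{\mathfrak a}\chi=1$ (so $0\le\widehat\chi\le1$ and $\widehat\chi$ is Schwartz), choose scales $a_n>0$ with $\sum_n a_n<\infty$ whose counting function $\#\{n:a_n>1/r\}$ is comparable to $\psi(r)/\log r$, and set $F=\chi_{a_1}*\chi_{a_2}*\cdots$ with $\chi_a(x)=a^{-d}\chi(x/a)$; then $\operatorname{supp}F\subseteq\overline{B(0,\sum_n a_n)}$, $\widehat F(\lambda)=\prod_n\widehat\chi(a_n\lambda)$, $\widehat F(0)=1$, and the estimate of the infinite product is the usual one, with $\sum_n a_n<\infty$ precisely equivalent to the finiteness of $I$. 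Then set $f=\mathcal A^{-1}F$: since the Abel transform $\mathcal A$ is a support-preserving topological isomorphism of $C_c^\infty(K\backslash G/K)$ onto $C_c^\infty(\mathfrak a)^W$ intertwining the spherical transform with the Euclidean Fourier transform, $f\in C_c^\infty(K\backslash G/K)$ is nontrivial and $\widehat f=\widehat F$, so $f$ satisfies (\ref{Linftyest}). (Alternatively, the function produced in the proof of Theorem~\ref{symthm}(b) can be taken bi-$K$-invariant, in which case its spherical transform automatically satisfies the stronger pointwise bound (\ref{Linftyest}).)

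The main obstacle lies in part (b): one must reproduce the classical scalar construction in $\R^d$ (convergence of the infinite convolution in $C_c^\infty$, decay of $\prod_n\widehat\chi(a_n\lambda)$, the equivalence $\sum_n a_n<\infty\Leftrightarrow I<\infty$) and invoke the support-preserving and intertwining properties of the Abel transform in arbitrary rank, which is where the structure of $X$ genuinely enters. Part (a) is essentially routine once one observes that smoothing by a compactly supported radial mollifier makes the spherical-transform factor rapidly decreasing, absorbing the polynomial Plancherel density; the remaining points (the convolution identity, the finite-speed-of-support property, the approximate-identity property in $L^1(X)$, and the polynomial bound on $|{\bf c}(\lambda)|^{-2}$) are all standard.
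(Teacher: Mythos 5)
Your part (a) follows the paper's route in spirit: convolve $f$ with a compactly supported bi-$K$-invariant smooth function, use the Paley--Wiener decay of the spherical transform of the mollifier together with the polynomial bound $|{\bf c}(\lambda)|^{-2}\leq C(1+\|\lambda\|_B)^{\dim\mathfrak n}$ from Remark \ref{cprop} to convert the pointwise hypothesis (\ref{Linftyest}) into the integrated hypothesis (\ref{symest}), and then invoke Theorem \ref{symthm}(a). You run this with a mollifier family $\phi_\varepsilon$ and pass to the $L^1$ limit, while the paper (after a reduction "as in Theorem \ref{symthm}" to the $K$-biinvariant case on a ball about $o$) convolves with a single fixed $\phi\in C_c^\infty(K\backslash G/K)$ supported in $\mathcal B(o,L/2)$ and concludes from the fact that $\widehat\phi$ is nonzero almost everywhere; both variants are valid, and yours in fact sidesteps the reduction step. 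For part (b) you take a genuinely different route: you rebuild the Euclidean radial seed from scratch by the classical infinite-convolution construction $F=\chi_{a_1}*\chi_{a_2}*\cdots$, with $\sum_n a_n<\infty$ equivalent to $I<\infty$, and then transport through $\mathcal A^{-1}$, whereas the paper simply notes that the bi-$K$-invariant $h=\mathcal A^{-1}(h_0)\in C_c^\infty(K\backslash G/K)$ already constructed in the proof of Theorem \ref{symthm}(b), via Levinson's one-dimensional result and the Radon transform, has $\widehat h=\mathcal F h_0$ and thus satisfies (\ref{Linftyest}) directly. Your construction works, but two small points should be tightened: the claim $0\le\widehat\chi\le 1$ does not follow from $\chi\ge 0$ alone (take instead $\chi=\eta*\eta^{\sharp}$ with $\eta^{\sharp}(x)=\overline{\eta(-x)}$ so that $\widehat\chi=|\widehat\eta|^2$), and smoothness of $F$ must come from convolving with a single fixed factor $\chi_{a_1}\in C_c^\infty$ rather than from decay of $\widehat F$, since $e^{-\psi(\|\lambda\|)}$ need not be rapidly decreasing. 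You do flag the paper's shortcut yourself as an alternative, which is the cleaner way to get part (b).
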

However, there is an important difference between Theorem \ref{levoriginal} and the theorems above. We note that in Theorem \ref{levoriginal} the decay of the Fourier transform was assumed only in one direction, that is around infinity. But in Theorem \ref{symthm} and Theorem \ref{boundedsymthm} the decay of the Fourier transform is uniform in all directions. It is not clear at the moment whether it is possible to prove an analogue of Theorem \ref{levoriginal} by assuming the decay of Fourier transform only in {\it some directions}. We refer the reader to \cite{Sh}, Theorem A$'$, where an analogous issue has been addressed for the Euclidean spaces $\R^d$.

We refer the reader to Section $3$ for unexplained notation used in the theorems above. For discussions on certain consequences and variants of Theorem \ref{symthm} see Theorem \ref{Lpsymthm}, Remark \ref{trivial} and Remark \ref{finalrem}. 

This paper is organised as follows. In Section $2$ we prove some results on Euclidean spaces $\R^d$ which will be used for the proof of Theorem \ref{symthm}. The main results of this section are Lemma \ref{levlem} and Lemma \ref{Euclideanlemma}. In Section $3$ we recall the required preliminaries regarding harmonic analysis on Riemannian symmetric spaces of noncompact type. In Section $4$ we first prove an approximation result (Theorem \ref{phidense}) which we then apply to prove Theorem \ref{symthm}.

\section{Some results on Euclidean spaces}
In this section, our aim is to prove certain approximation results for $\R^d$, $d\geq 1$, which will be needed later on. We start by describing certain relevant function spaces.
Throughout this article, $\psi:[0,\infty)\to [0,\infty)$ will stand for an increasing function such that $\psi (r)$ goes to infinity as $r$ goes to infinity.
We consider the following space of functions
\bes
C_\psi(\R^d) = \left\{\phi: \R^d\rightarrow \C ~\big|~\phi \txt{ is continuous and } \lim_{\|x\| \to \infty}\frac{\phi(x)}{e^{\psi(\|x\|)}}=0 \right\},
\ees
where we have set
\bes
\|\phi\|_\psi = \sup_{x\in\R^d}\frac{|\phi(x)|}{e^{\psi(\|x\|)}},\:\:\:\:\phi\in C_\psi(\R^d). 
\ees
Clearly, $(C_{\psi}(\R^d), \|\cdot\|_{\psi})$ is a normed linear space. The next lemma follows by the usual technique of multiplying by a cut off function.    
\begin{lem} \label{ccdense}
$C_c(\R^d)$ is dense in $(C_{\psi}(\R^d), \|\cdot\|_{\psi})$.
\end{lem}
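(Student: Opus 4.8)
The plan is to show that any $\phi \in C_\psi(\R^d)$ can be approximated in the $\|\cdot\|_\psi$ norm by compactly supported continuous functions, using a standard truncation argument made quantitative by the decay built into the definition of $C_\psi(\R^d)$. Fix $\phi \in C_\psi(\R^d)$ and let $\varepsilon > 0$. First I would use the defining property $\lim_{\|x\|\to\infty} |\phi(x)| e^{-\psi(\|x\|)} = 0$ to choose $R > 0$ such that $|\phi(x)| \leq \varepsilon\, e^{\psi(\|x\|)}$ whenever $\|x\| \geq R$. Next I would pick a continuous cutoff function $\chi : \R^d \to [0,1]$ with $\chi \equiv 1$ on the ball $\{\|x\| \leq R\}$, $\chi \equiv 0$ outside $\{\|x\| \leq R+1\}$, so that $\chi \phi \in C_c(\R^d)$.

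It then remains to estimate $\|\phi - \chi\phi\|_\psi = \sup_{x\in\R^d} |\phi(x)|\,(1-\chi(x))\, e^{-\psi(\|x\|)}$. Since $1 - \chi$ is supported in $\{\|x\| \geq R\}$ and takes values in $[0,1]$, on that region we have $|\phi(x)|\,(1-\chi(x))\,e^{-\psi(\|x\|)} \leq |\phi(x)|\, e^{-\psi(\|x\|)} \leq \varepsilon$ by the choice of $R$; off that region the expression vanishes. Hence $\|\phi - \chi\phi\|_\psi \leq \varepsilon$. Since $\varepsilon$ was arbitrary and $\chi\phi \in C_c(\R^d)$, this proves density.

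There is no real obstacle here; the only mild point to keep in mind is that one needs $\phi(x) e^{-\psi(\|x\|)}$ to genuinely tend to $0$ (not merely be bounded) for the truncation error to be controlled by a prescribed $\varepsilon$, and this is exactly what membership in $C_\psi(\R^d)$ provides. Note that monotonicity of $\psi$ is not even needed for this particular lemma — only the decay condition in the definition of the space is used — so the proof is as routine as the remark preceding the statement suggests.
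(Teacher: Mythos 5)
Your proof is correct and is exactly the standard cutoff-function argument that the paper invokes (the paper simply states ``follows by the usual technique of multiplying by a cut off function'' without writing it out). The estimate $\|\phi - \chi\phi\|_\psi \leq \varepsilon$ via the choice of $R$ and a cutoff supported in $\{\|x\|\leq R+1\}$ is precisely the intended argument.
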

For a positive real number $L$, we denote by $\mathcal E_L$ the set of bounded, complex-valued functions on $\R^d$ which have an entire extension to $\C^d$ with exponential type at most $L$. That is,
\beas
\mathcal E_L &=& \big\{f: \R^d \to \C\mid f \txt{ is bounded on $\R^d$, extends to an entire function on $\C^d$ and }\\
&&\:\:\;\;\:\:\:\:\:\:\:\:\:\:\: |f(z)| \leq C_\epsilon ~ e^{(L+\epsilon)\|z\|}, ~\epsilon > 0, ~ z\in \C^d\big\}.
\eeas
A standard application of the Phragm\'en-Lindel\"of theorem shows that $\mathcal E_L$ has the following alternative description (\cite{G}, Lemma 2) 
\beas
\mathcal E_L &=& \big\{f: \R^d \to \C \mid f \txt{ is bounded, extends to an entire function on $\C^d$ and }\\
&&\:\:\:\:\:\:\:\:\:\:\:\: |f(z)| \leq C ~ e^{L\|\Im z\|},\:\:z\in\C^d \big\}.
\eeas
Since the elements of $\mathcal{E}_L$ are bounded continuous functions on $\R^d$ and $\psi(\|x\|)$ goes to infinity as $\|x\|$ tends to infinity, it follows that $\mathcal E_L \subseteq C_{\psi}(\R^d)$. For $\la\in \R^d$, we consider the exponential functions $e_\la:\R^d \to\C$ given by 
\bes
e_\la(x)= e^{i\la\cdot x}, 
\ees
where $\la\cdot x$ denotes the usual Euclidean inner product. Since $e_\la$ is a bounded continuous function
it belongs to $C_{\psi}(\R^d)$, for all $\la\in \R^d$. 
Let $Q(0,L)$ denote the cube centered at zero and sides parallel to the coordinate axes with side length $2L/ \sqrt d$,
\bes
Q(0,L)= \left\{x =(x_1,\cdots, x_d) \in \R^d ~\big|~ |x_j|< \frac{L}{\sqrt d},\:\:\:\:  1 \leq j \leq d\right \}. 
\ees
Let 
\bes
\Phi_L(\R^d)= \txt{Span}\{e_\la: \la\in Q(0, L)\}.
\ees
Clearly, $\Phi_L(\R^d) \subset \mathcal E_L$. The following results constitute the heart of the proof of Levinson's theorem on $\R$. For $d=1$, it was proved in (\cite{Koo}, Ch VII, P. 243; \cite{Koo}, Ch VI, P. 171) that
\begin{enumerate}
\item $\Phi_L(\R)$ is dense in $(\mathcal E_L, \|\cdot\|_{\psi})$.
\item $\mathcal E_L$ is dense in $(C_{\psi}(\R), \|\cdot\|_{\psi})$ if 
\be\label{levlemcond}
\int_1^{\infty}\frac{\psi(r)}{r^2}dr = \infty.
\ee
\end{enumerate}
It follows from the above that for every positive real number $L$ the space $\Phi_L(\R)$ is dense in $(C_{\psi}(\R), \|\cdot\|_{\psi})$ if (\ref{levlemcond}) holds. It is crucial for us to be able to extend these results to $\R^d$, $d>1$.

\begin{lem} \label{levlem}
The space $\Phi_L(\R^d)$ is dense in $(C_\psi(\R^d),\|\cdot\|_\psi)$ if $\psi$ satisfies (\ref{levlemcond}).
\end{lem}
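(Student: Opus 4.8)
The plan is to reduce the $d$-dimensional statement to the one-dimensional results quoted in the excerpt (density of $\Phi_L(\R)$ in $(C_\psi(\R),\|\cdot\|_\psi)$ under condition \eqref{levlemcond}, together with density of $C_c(\R^d)$ from Lemma \ref{ccdense}), by an inductive, coordinate-by-coordinate approximation. First I would use Lemma \ref{ccdense} to reduce to approximating a compactly supported continuous $\phi$ on $\R^d$; fix such a $\phi$ with support in a cube $Q(0,R)$ and let $\epsilon>0$. The key observation is that $\Phi_L(\R^d)$ is closed under multiplication, and moreover a tensor product $e_{\la_1}(x_1)\cdots e_{\la_d}(x_d)$ with each $\la_j$ in the one-dimensional interval $(-L/\sqrt d, L/\sqrt d)$ lies in $\Phi_L(\R^d)$, since then $\la=(\la_1,\dots,\la_d)\in Q(0,L)$. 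So it suffices to approximate $\phi$ by finite sums of such tensor products in $\|\cdot\|_\psi$.

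The main technical point is choosing the right majorant to run the one-variable argument in each slot. The difficulty is that $\psi(\|x\|)$ does not split as a sum over coordinates, so I cannot directly write $C_\psi(\R^d)$ as a tensor product of one-dimensional spaces. To get around this, I would work with the auxiliary increasing function $\psi_0(r) = \psi(r/\sqrt d\,)$ (or a comparable one-variable profile), noting that for $x=(x_1,\dots,x_d)$ one has $\|x\| \le \sqrt d\,\max_j|x_j|$, hence $e^{\psi(\|x\|)} \ge$ something controlled coordinatewise is the wrong direction; instead I want an upper bound, so I use $\|x\|\ge |x_j|$ to get $e^{\psi(\|x\|)}\ge e^{\psi(|x_j|)}$, which is the useful direction for bounding $\|\cdot\|_\psi$ from above by the sup over one-variable slices. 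Concretely, for a function $F(x_1,\dots,x_d)$ I would estimate
\bes
\|F\|_\psi = \sup_{x}\frac{|F(x)|}{e^{\psi(\|x\|)}} \le \sup_{x_2,\dots,x_d}\ \sup_{x_1}\frac{|F(x_1,x_2,\dots,x_d)|}{e^{\psi(|x_1|)}},
\ees
so that approximating in the last variable uniformly over the remaining variables suffices, and one may iterate. Since $\psi$ satisfies \eqref{levlemcond}, so does $r\mapsto\psi(r)$ itself (no rescaling needed for this direction), and the one-dimensional result applies to each slice.

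Carrying this out: by the quoted one-dimensional fact, $\Phi_L(\R)$ is dense in $(C_\psi(\R),\|\cdot\|_\psi)$ when \eqref{levlemcond} holds. Apply this first in the variable $x_d$: for each fixed $(x_1,\dots,x_{d-1})$ the function $x_d\mapsto\phi(x_1,\dots,x_d)$ is compactly supported, and a compactness/uniformity argument (the support of $\phi$ is a fixed compact set and $\phi$ is uniformly continuous) lets me choose one finite set of frequencies $\mu_1,\dots,\mu_N\in(-L/\sqrt d,L/\sqrt d)$ and continuous coefficient functions $a_n(x_1,\dots,x_{d-1})$ — themselves compactly supported — with
\bes
\sup_{x_1,\dots,x_{d-1}}\ \sup_{x_d}\ \frac{\bigl|\phi(x) - \sum_{n=1}^N a_n(x_1,\dots,x_{d-1})\,e^{i\mu_n x_d}\bigr|}{e^{\psi(|x_d|)}} < \epsilon.
\ees
Then recurse: approximate each $a_n$ in the remaining $d-1$ variables by elements of $\Phi_L(\R^{d-1})$ built from one-dimensional frequencies in $(-L/\sqrt d, L/\sqrt d)$, controlling the total error (using that the $e^{i\mu_n x_d}$ are bounded and there are finitely many of them), and multiply through. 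The resulting approximant is a finite linear combination of tensor exponentials with each coordinate frequency in $(-L/\sqrt d,L/\sqrt d)$, hence lies in $\Phi_L(\R^d)$, and the error in $\|\cdot\|_\psi$ is $O(\epsilon)$. I expect the main obstacle to be making the "uniformity over the other variables" step fully rigorous — i.e. extracting a single finite frequency set and continuous (compactly supported) coefficients valid simultaneously for all slices — which is where the uniform continuity and fixed compact support of $\phi$ must be used carefully, perhaps by first approximating $\phi$ by a finite sum of products of one-variable bumps (Stone–Weierstrass on the cube) and handling each product separately.
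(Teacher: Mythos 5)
Your strategy (reduce to the one-dimensional density of $\Phi_L(\R)$ in $(C_\psi(\R),\|\cdot\|_\psi)$ and rebuild coordinate by coordinate) is in the same spirit as the paper's proof, but the proposal stops short at exactly the critical step. The ``compactness/uniformity argument'' that is supposed to produce one finite frequency set $\mu_1,\dots,\mu_N$ and continuous coefficient functions $a_n$ valid for all slices simultaneously is not actually given. The 1D density theorem is an existence statement slice by slice, so different slices a priori yield different, incompatible approximants. One can repair this with a partition of unity: cover the projection of $\mathrm{supp}\,\phi$ by balls $B(x'_m,\delta)$ chosen via uniform continuity, approximate the central slice of each ball by some $g_m\in\Phi_{L/\sqrt d}(\R)$ in $\|\cdot\|_\psi$, and take the approximant $\sum_m\eta_m(x')g_m(x_d)$. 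But that is a different device from Stone--Weierstrass, which produces tensor products rather than slice approximations, so the fallback you mention does not so much fill this gap as route around it.

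If you instead go the Stone--Weierstrass route and approximate each factor of a tensor product $f_1(x_1)\cdots f_d(x_d)$, you run into a second problem. The naive telescoping fails: a bound $\|f_j-g_j\|_\psi<\epsilon$ in the 1D $\psi$-norm gives only $|g_j(x_j)|\le\|f_j\|_\infty+\epsilon e^{\psi(|x_j|)}$, so a product $\prod_{j<k}|g_j(x_j)|$ can pick up several factors of $e^{\psi(|x_j|)}$ which together overwhelm the single $e^{\psi(\|x\|)}$ in the denominator of $\|\cdot\|_\psi$. The paper's fix is to set $\psi_0:=\psi/d$ (note: $\psi/d$, not $\psi(r/\sqrt d)$, which you considered and rightly discarded) and approximate each factor in the coarser 1D norm $\|\cdot\|_{\psi_0}$: then $\psi(\|x\|)=d\psi_0(\|x\|)\ge\sum_j\psi_0(|x_j|)$, so each factor of the telescoping sum is normalized by its own $e^{\psi_0(|x_j|)}$, and the bound $\|g_j\|_{\psi_0}\le 1+\|f_j\|_{\psi_0}$ closes the estimate. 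Your alternative error control (using that the multipliers $e^{i\mu_n x_d}$ have modulus one) is sound provided the replacement is done one coordinate at a time with the already-chosen factors frozen; that would indeed avoid $\psi_0$, but only after the uniformity step is actually carried out. Finally, a small slip: $\Phi_L(\R^d)$ is not closed under multiplication, since $e_\lambda e_\mu=e_{\lambda+\mu}$ and $\lambda+\mu$ need not lie in $Q(0,L)$; this is not load-bearing for the rest of your outline, but it should be removed.
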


\begin{proof}
We know that the result is true for $d=1$. Our method is to reduce the problem to the case $d=1$ and then apply the available results. We define 
\bes
\psi_0(s)=\frac{\psi(s)}{d}, \:\:\:\: s\in [0, \infty),
\ees 
and consider the following spaces of functions
\beas
\mathcal P C_c(\R^d) &=& \txt{span}\{f:\R^d \to \C ~|~ f(x_1, \cdots, x_d) = f_1(x_1)\cdots f_d(x_d), \\
&& f_j\in C_c(\R), ~ x_j\in \R, \:\:\:\: 1\leq j \leq d\} \subseteq  C_c(\R^d).
\eeas

\beas
\mathcal P \Phi_L(\R^d) &=& \txt{span}\{f:\R^d \to \C ~|~ f(x_1, \cdots, x_d) = f_1(x_1)\cdots f_d(x_d), \\
&& f_j\in \Phi_{\frac{L}{\sqrt d}}(\R),~x_j\in \R, \:\:\:\: 1\leq j\leq d\}\subseteq \Phi_L(\R^d).
\eeas

\beas
\mathcal P C_{\psi_0}(\R^d) &=& \txt{span }\{f:\R^d \to \C ~|~ f(x_1, \cdots, x_d) = f_1(x_1)\cdots f_d(x_d), \\
&& f_j\in C_{\psi_0}(\R), ~ x_j\in \R, \:\:\:\: 1 \leq j  \leq d\}.
\eeas
By a standard application of the Stone-Weierstrass theorem, it follows that $\mathcal P C_c(\R^d)$ is dense in $(C_c(\R^d), \|\cdot\|_\infty)$. Since $\|\phi\|_{\psi}$ is smaller than $\|\phi\|_\infty$, we get that $\mathcal P C_c(\R^d)$ is dense in $(C_c(\R^d), \|\cdot\|_\psi)$. Lemma \ref{ccdense} now implies that $\mathcal P C_c(\R^d)$ is dense in $(C_\psi(\R^d), \|\cdot\|_{\psi})$. Next, we notice that 
\be\label{spacerln}
\mathcal P C_c(\R^d)\subseteq \mathcal P C_{\psi_0}(\R^d) \subseteq C_{\psi}(\R^d).
\ee
The first inclusion follows straightway from the definitions involved. It suffices to check the second inclusion for functions of the form 
\bes
\phi(x_1, \cdots, x_d)= \phi_1(x_1)\cdots \phi_d(x_d),
\ees 
where $\phi_j\in C_{\psi_0}(\R), 1\leq j\leq d$. As $\psi$ (and hence $\psi_0$) is an increasing function, we get that
\be \label{ccinclusion}
\frac{|\phi(x)|}{e^{\psi(\|x\|)}}= \frac{|\phi_1(x_1)|\cdots |\phi_d(x_d)|}{e^{d\psi_0(\|x\|)}}\leq \frac{|\phi_1(x_1)|}{e^{\psi_0(|x_1|)}}\cdots \frac{|\phi_d(x_d)|}{e^{\psi_0(|x_d|)}}.
\ee
From the definition of $C_{\psi_0}(\R)$, it follows that 
\bes
\lim_{|x_j|\to \infty}\frac{\phi_j(x_j)}{e^{\psi(|x_j|)}}= 0,\:\:\:\: 1\leq j\leq d.
\ees 
In particular, the functions 
\bes
s \to \phi_j(s)/e^{\psi(|s|)}, \:\:\:\: \txt{ for } s\in \R,
\ees 
are bounded for all $j \in \{1, \cdots, d\}$. If the norm of $x$ goes to infinity then at least one of the coordinates $x_j$ of $x$ must go to infinity. Hence, we conclude from (\ref{ccinclusion}) that 
\bes
\lim_{\|x\|\to \infty} \frac{|\phi(x)|}{e^{\psi(\|x\|)}}=0.
\ees 
It now follows from (\ref{spacerln}) that $\mathcal P C_{\psi_0}(\R^d)$ is dense in $(C_{\psi}(\R^d), \|\cdot\|_{\psi})$.
As $\mathcal P \Phi_L(\R^d)$ is contained in $\Phi_L(\R^d)$, it suffices for us to prove that $\mathcal P \Phi_L(\R^d)$ is dense in $(\mathcal P C_{\psi_0}(\R^d), \|\cdot\|_{\psi})$. This is where we are going to use the case $d=1$. It is enough for us to prove that functions of the form 
\bes
f(x_1, \cdots, x_d)= f_1(x_1)\cdots f_d(x_d),
\ees 
 $f_j\in C_{\psi_0}(\R), 1\leq j\leq d$, can be approximated by elements of $\mathcal P \Phi_L(\R^d)$ in $\|\cdot\|_\psi$ norm. Now, given any $\epsilon \in (0, 1)$, by the case $d=1$, there exists $g_j\in \Phi_{L/\sqrt d}(\R), 1\leq j\leq d$, such that 
\bes
\sup_{s\in \R}~\frac{|f_j(s)- g_j(s)|}{e^{\psi_0(|s|)}}<\epsilon .
\ees
By triangle inequality we have 
\bes
\sup_{s\in \R}~\frac{|g_j(s)|}{e^{\psi_0(|s|)}}\leq 1+ \|f_j\|_{\psi_0}, \:\:\:\: 1\leq j\leq d.
\ees
We now define
\bes
g(x)=g_1(x_1) \cdots g_d(x_d), \:\:\:\: x=(x_1, \cdots x_d)\in \R^d. 
\ees
Clearly, $g\in \mathcal P \Phi_L(\R^d)$. By defining 
\bes
g_0(y)= e^{\psi_0(|y|)}= f_{d+1}(y),\:\:\:\: y\in\R, 
\ees 
and using
\bes
\psi (\|x\|)\geq \psi (|x_j|),\:\:\:\:1\leq j\leq d
\ees
( as $\psi$ is increasing) we have for all $x=(x_1, \cdots, x_n)\in \R^d$
\beas
\frac{|f(x)- g(x)|}{e^{\psi(\|x\|)}} 
&\leq &\frac{|f_1(x_1)\cdots f_d(x_d)- g_1(x_1)\cdots g_d(x_d)|}{e^{\psi_0(|x_1|)}\cdots e^{\psi_0(|x_d|)}}\\
&\leq & \sum_{k=1}^{d} \frac{|f_k(x_k)- g_k(x_k)|}{e^{\psi_0(|x_k|)}} \left(\prod_{j=k+1}^{d+1} \frac{|f_j(x_j)|}{e^{\psi_0(|x_j|)}} \prod_{j=0}^{k-1} \frac{|g_j(x_j)|}{e^{\psi_0(|x_j|)}}\right)\\
&\leq & \epsilon  d \prod_{j=1}^d(1+\|f_j\|_{\psi_0}).
\eeas
This completes the proof.
\end{proof}

\begin{rem} \label{levlemrem} 
Since $\Phi_L(\R^d) \subseteq \mathcal{E}_L(\R^d)$ it follows from the above lemma that $\mathcal E_L(\R^d)$ is also dense in $(C_\psi(\R^d), \|\cdot\|_\psi)$, if $\psi$ satisfies (\ref{levlemcond}).
\end{rem}
Our next result can be viewed as an approximation theorem on $\R^d$. It will play a fundamental role in the proof of our main theorem.
\begin{lem}\label{Euclideanlemma}
Let $\mu$ be a Radon measure on $\R^d$ and $f\in C_c(\R^d)$ with $ \txt{supp } f\subset B(0, L)= \{x\in \R^d: \|x\| < L\}$, for some given positive number $L$. Suppose $g: \R^d\times \R^d \to \C$ is such that 
\begin{enumerate}
\item[i)]$|g(x,\la)|\leq 1$,   for all $x\in \R^d, \la \in \R^d$.
\item[ii)] For each $\la\in \R^d$, the function $g(\cdot, \la)$ is smooth.
\item[iii)] For all $x\in B(0, L)$ and $\la$ in any compact subset $K$ of $\R^d$,
\bes
\left| \frac{\partial}{\partial x_j} g(x, \la) \right|\leq M_K, \:\:\:\: 1\leq j\leq d.
\ees 
\end{enumerate}
If
\bes
F(\la)= \int_{B(0, L)}f(x)g(x, \la)~d\mu(x), \:\:\:\:\:\:\: \la\in \R^d,
\ees
then for any given $\epsilon$ and $ \tau$ positive, there exists $\{v_1, \cdots, v_N\}\subset B(0, L)$  and $C_{v_j}\in \C$, $j=1, \cdots, N$, such that 
\bes
\left|F(\la)- \sum_{j=0}^N C_{v_j}g(v_j,\la)\right|< \epsilon, \:\:\:\: \txt{ for all }  \la\in B(0, \tau),
\ees
and
\bes
\left| \sum_{j=0}^N C_{v_j}g(v_j,\la) \right|\leq \int_{B(0, L)}|f(x)|~ d\mu(x), \:\:\:\: \txt{ for all } \la \in \R^d.
\ees 
\end{lem}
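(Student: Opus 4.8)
The plan is to approximate $F(\lambda)$ by a finite sum mimicking a Riemann sum for the defining integral, but taken against the measure $\mu$. The first step is geometric and is the only place that requires genuine care: since $f$ is continuous with compact support contained in the \emph{open} ball $B(0,L)$, the distance from $\txt{supp }f$ to the complement of $B(0,L)$ is positive, so one can fix $\delta_0>0$ such that the closed $\delta_0$-neighbourhood of $\txt{supp }f$ still lies inside $B(0,L)$; note also that $\mu(\txt{supp }f)<\infty$ (as $\mu$ is Radon and $\txt{supp }f$ is compact), so $F$ is well defined and $\int_{B(0,L)}|f|\,d\mu<\infty$. For a small parameter $h>0$ with $\sqrt d\,h<\delta_0$, I would partition $\R^d$ into the disjoint family of half-open cubes $\{hk+[0,h)^d:k\in\Z^d\}$, keep the finitely many of them, say $E_1,\dots,E_N$, that meet $\txt{supp }f$, and observe that each such $E_j$ is convex and contained in $B(0,L)$ (every point of $E_j$ lies within distance $\sqrt d\,h<\delta_0$ of a point of $\txt{supp }f$). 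Then pick any $v_j\in E_j$ and set $C_{v_j}=\int_{E_j}f\,d\mu\in\C$.

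Granting this, the uniform estimate is immediate: by hypothesis (i), $|g(v_j,\lambda)|\le1$ for \emph{every} $\lambda\in\R^d$, and since the $E_j$ are disjoint with $\txt{supp }f\subset\bigcup_j E_j$,
\bes
\Bigl|\sum_j C_{v_j}g(v_j,\lambda)\Bigr|\le\sum_j\int_{E_j}|f|\,d\mu=\int_{B(0,L)}|f|\,d\mu .
\ees
For the first estimate I would write, again using $\txt{supp }f\subset\bigcup_j E_j$,
\bes
F(\lambda)-\sum_j C_{v_j}g(v_j,\lambda)=\sum_j\int_{E_j}f(x)\bigl(g(x,\lambda)-g(v_j,\lambda)\bigr)\,d\mu(x),
\ees
and estimate the inner difference for $\lambda\in B(0,\tau)$ by applying the fundamental theorem of calculus to $t\mapsto g(v_j+t(x-v_j),\lambda)$ on $[0,1]$: the segment stays in the convex set $E_j\subset B(0,L)$, so hypotheses (ii) and (iii) (with $K=\overline{B(0,\tau)}$ and its constant $M_K$) give $|g(x,\lambda)-g(v_j,\lambda)|\le d\,h\,M_K$ for all $x\in E_j$, because each coordinate difference is at most $h$. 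Consequently
\bes
\Bigl|F(\lambda)-\sum_j C_{v_j}g(v_j,\lambda)\Bigr|\le d\,h\,M_K\int_{B(0,L)}|f|\,d\mu\qquad(\lambda\in B(0,\tau)),
\ees
and it remains only to choose $h$ small enough (still $\sqrt d\,h<\delta_0$) that the right-hand side is less than $\epsilon$; after relabelling the $v_j$ this is the asserted conclusion.

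I do not expect a real obstacle: the estimate is a uniform-continuity/Riemann-sum argument. The one subtlety worth flagging is that the derivative bound (iii) is only assumed on $B(0,L)$, so the partition must be fine enough that every cube meeting $\txt{supp }f$ lies entirely inside $B(0,L)$ — which is exactly why the preliminary choice of $\delta_0$ (exploiting that $\txt{supp }f$ is a compact subset of the \emph{open} ball) is made before fixing $h$. A degenerate case ($\int_{B(0,L)}|f|\,d\mu=0$, forcing $F\equiv0$) is handled trivially by taking all $C_{v_j}=0$.
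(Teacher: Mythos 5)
Your argument is correct and rests on the same Riemann-sum idea as the paper: tile by small cubes, replace $g(\cdot,\la)$ on each cube by its value at a representative point, and control the error using the derivative bound in hypothesis (iii) over the compact set $\overline{B(0,\tau)}$. The one point where you genuinely diverge is in how you handle the boundary of $B(0,L)$. You first observe that $\txt{supp }f$ is a compact subset of the \emph{open} ball, fix $\delta_0>0$ with the closed $\delta_0$-neighbourhood of $\txt{supp }f$ still inside $B(0,L)$, and then take the cubes of mesh $h$ (with $\sqrt d\,h<\delta_0$) that \emph{meet} $\txt{supp }f$; these cubes simultaneously cover $\txt{supp }f$ and sit inside $B(0,L)$, so the only error is the mean-value term $d\,h\,M_K\int|f|\,d\mu$. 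The paper instead collects the dyadic cubes \emph{contained} in $B(0,L)$, which in general do not cover all of $\txt{supp }f$ at a fixed scale, and therefore must carry an additional truncation error $\mu\left(B(0,L)\setminus A^n\right)\,\|f\|_{L^\infty}$, controlled via continuity of $\mu$ from below. Your observation removes that extra step and gives a one-term error estimate; otherwise the two proofs are the same (both also note that the finiteness of $\mu$ on compacts makes $F$ and $\int_{B(0,L)}|f|\,d\mu$ well defined, and both use the hypothesis $|g|\le 1$ to get the global bound $\bigl|\sum_j C_{v_j}g(v_j,\la)\bigr|\le\int_{B(0,L)}|f|\,d\mu$).
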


\begin{rem} 
The lemma basically says that the function $F$ can be uniformly approximated on compact sets by finite linear combinations of functions of the form $g(v_j, \cdot)$. As a typical example of $g$ one can take $g(x,\la)=e^{i\la\cdot x}$. 
\end{rem}
\begin{proof}
[Proof of Lemma \ref{Euclideanlemma}.]
We fix $n\in \N$ and for $k=(k_1, \cdots, k_d)\in \Z^d$ consider the pairwise disjoint rectangles
\bes
I_k^n= \prod_{j=1}^d \left[\frac{k_j}{2^n}, \frac{k_j+1}{2^n}\right),
\ees
and set
\be \label{Andefn}
A^n= \bigcup_{k\in \Z^d}\left\{I_k^n: I_k^n\subseteq B(0, L)\right\}. 
\ee
We note that the set $A^n$ is nonempty for sufficiently large values of $n$. Moreover, $A^n \subset A^{n+1}$, for all $n \in \N$ and  
\bes \label{AunionB}
\bigcup_{n\in \N}A^n= B(0, L).
\ees 
Hence, given any positive $\epsilon$, there exists $N_1\in \N$
such that  
\bes
\mu\left(B(0, L)\backslash A^n\right)< \frac{\epsilon}{2},\:\:\:\:\: n \geq N_1,
\ees
as $\mu$ takes finite values on compact sets. Therefore, for $n \geq N_1$
\bea \label{AnBlapproxy}
&& \left| \int_{B(0, L)}f(x)g(x, \la) d\mu(x)- \int_{A^n}f(x)g(x, \la) d\mu(x)\right| \nonumber\\
 &\leq & \int_{B(0, L)\backslash A^n}| f(x)g(x, \la)| d\mu(x) \nonumber\\
&\leq & \int_{B(0, L)\backslash A^n} |f(x)| d\mu(x) \nonumber \\
& < & \frac{\epsilon}{2} \|f\|_{L^\infty(B(0,L))}.
\eea
For $\la \in \R^d$, we define two sequences of functions 
\bes
F_n(\la) = \int_{A^n}f(x)g(x, \la) d\mu(x)=\sum_{k\in \Z^d, I_k^n\subset B(0, L)}\int_{I_k^n}f(x)g(x,\la ) d\mu(x),
\ees
and 
\bea \label{hndefn}
h_n(\la) &=& \sum_{k\in \Z^d, I_k^n\subset B(0, L)} g\left(\frac{k}{2^n}, \la\right) \int_{I_k^n}f(x) d\mu(x)\nonumber\\
&=& \sum_{k\in \Z^d, I_k^n\subset B(0, L)} C_{k,n} ~ g\left(\frac{k}{2^n}, \la\right), 
\eea
where 
\bes
C_{k,n} = \int_{I_k^n}f(x) d\mu(x).
\ees
Let $\tau$ be a positive real number. Now, using the mean value inequality for derivative (\cite{Ru0}, Theorem 9.19) applied to the real and imaginary part of $g$ we get that for all $\la$ in $B(0, \tau)$,  
\bea \label{Fnhnapproxy}
\vline F_n(\la)- h_n(\la)\vline &\leq & \sum_{k\in \Z^d, I_k^n \subset B(0, L)} \int_{I_k^n}\left|f(x)\right| \left| g(x, \la)- g\left(\frac{k}{2^n}, \la\right)\right| d\mu(x) \nonumber\\
&\leq & C_{M_{\tau}} \frac{\sqrt d}{2^n}\|f\|_{L^1(\R^d)}.
\eea
Therefore, for all $\la \in B(0, \tau)$ and $n \geq N_1$ we have from (\ref{AnBlapproxy}) and (\ref{Fnhnapproxy}) that 
\beas
|F(\la)- h_n(\la)| &\leq& |F(\la)- F_n(\la)|+ |F_n(\la)- h_n(\la)|\\
& < & \frac{\epsilon}{2} \|f\|_{L^\infty(B(0, L))}+ C_{M_{\tau}}\frac{\sqrt d}{2^n}\|f\|_{L^1(\R^d)}.
\eeas
From the above inequalities, it follows that there exists $N_2\in \N$ sufficiently large  such that for all $n \geq N_2$ and for all $\la\in B(0,\tau )$
\bes
|F(\la)- h_n(\la)| < C_{\tau}\epsilon.
\ees
It is also clear from the definition (\ref{hndefn}) of $h_n$ that 
\bes
|h_n(\la)| \leq \int_{B(0, L)}|f(x)|~dx, \:\:\:\: \txt{ for all } \la \in \R^d.
\ees
This completes the proof.
\end{proof}

We end this section by recalling some standard facts regarding Radon transform on $\R^d$ (see \cite{He} for details). For $\omega \in S^{d-1}$, the unit sphere in $\R^d$, and $s \in \R$, let
\bes
H_{\omega,s}=\{x\in\R^d ~|~ x \cdot \omega=s\}
\ees
denote the hyperplane in $\R^d$ with normal $\omega$ and distance $|s|$ from the origin. It is clear from the above definition that $H_{\omega,s}=H_{-\omega,-s}$. 

\begin{defn} \em
For $f \in C_c(\R^d)$ the Radon transform ${\mathcal R}f$ of the function $f$ is defined by the integral
\bes
{\mathcal R}f(\omega,s)=\int_{H_{\omega ,s}}f(x)dm(x), \:\:\:\: \omega \in S^{d-1},  s\in \R,
\ees 
where $dm(x)$ is the $d-1$ dimensional Lebesgue measure on $H_{\omega,s}$.
\end{defn}
 The one-dimensional Fourier transform of ${\mathcal R}f(\omega, \cdot)$ and the $d$-dimensional Fourier transform of $f$ are closely connected by the slice projection theorem (\cite{He}, P. 4):
\be \label{sliceproj}
\mathcal F{f}(\la\omega)=\mathcal F{{\mathcal R}f(\omega, \cdot)}(\la), \:\:\:\: \txt{ for } \la \in \R, ~ \omega \in S^{d-1},
\ee
where on the right-hand side we have taken the one-dimensional Fourier transform of the function ${\mathcal R}f(\omega, \cdot)$ on $\R$. Clearly, if $f$ is a radial function on $\R^d$, then ${\mathcal R}f(\omega,s)$ is independent of $\omega$ and hence  can be considered as an even function on $\R$. Let $C_c^{\infty}(\R^d)_0$ denote the subspace of radial functions in $C_c^{\infty}(\R^d)$ and let $C_c^{\infty}(\R)_e$ be the subspace of even functions in $C_c^{\infty}(\R)$. By Theorem 2.10 of \cite{He} it is known that
\be \label{radonmapping}
{\mathcal R}:C_c^{\infty}(\R^d)_0\longrightarrow C_c^{\infty}(\R)_e
\ee
is a bijection with the property that if $g\in C_c^{\infty}(\R)_e$ satisfies $\txt{supp}~g\subseteq [-l,l]$ then there exists a unique $f\in C_c^{\infty}(\R^d)_0$ satisfies $\txt{supp}~f\subseteq \overline{B(0,l)}$ such that ${\mathcal R}f=g$.
\section{Riemannian symmetric spaces of noncompact type}
In this section we describe the necessary preliminaries regarding semisimple Lie groups and harmonic analysis on associated Riemannian symmetric spaces. These are standard and can be found, for example, in \cite{GV, H, H1, H2}. 

Let $G$ be a connected, noncompact, real semisimple Lie group with finite centre and $\mathfrak g$ its Lie algebra. We fix a Cartan involution $\theta$ of $\mathfrak g$ and write $\mathfrak g = \mathfrak k \oplus \mathfrak p$ where $\mathfrak k$ and $\mathfrak p$ are $+1$ and $-1$ eigenspaces of $\theta$ respectively. Then $\mathfrak k$ is a maximal compact subalgebra of $\mathfrak g$ and $\mathfrak p$ is a linear subspace of $\mathfrak g$. The Cartan involution $\theta$ induces an automorphism $\Theta$ of the group $G$ and $K=\{g\in G\mid \Theta (g)=g\}$ is a maximal compact subgroup of $G$. Let $\mathfrak a$ be a maximal subalgebra in $\mathfrak p$; then $\mathfrak a$ is abelian. We assume that $\dim \mathfrak a = d$, called the real rank of $G$, as well as the rank of $X=G/K$. Let $B$ denote the Cartan Killing form of $\mathfrak g$. It is known that $B\mid_{\mathfrak p\times\mathfrak p}$ is positive definite and hence induces an inner product and a norm $\| \cdot \|_B$ on $\mathfrak p$. The homogeneous space $X=G/K$ is a smooth manifold. The tangent space of $X$ at the point $o=eK$ can be naturally identified to $\mathfrak p$ and the restriction of $B$ on $\mathfrak p$ then induces a $G$-invariant Riemannian metric $\mathsf d$ on $X$. For a given $g\in G$ and a positive number $L$ we define
\bes
{\mathcal B}(gK,L)=\{xK\mid x\in G,\:\:{\mathsf d}(gK,xK)<L\}
\ees
to be the open ball with center $gK$ and radius $L$.
We can identify $\mathfrak a$ with $\mathbb{R}^d$ endowed with the inner product induced from $\mathfrak p$ and let $\mathfrak{a}^*$ be the real dual of $\mathfrak{a}$. The set of restricted roots of the pair $(\g, \mathfrak{a})$ is denoted by $\Sigma$.  It consists of all $\alpha \in \mathfrak{a}^*$ such that
\bes
\g_\alpha = \left\{X\in \g ~|~ [Y, X] = \alpha(Y) X, \:\: \txt{ for all } Y\in \mathfrak{a} \right\}
\ees
is nonzero with $m_\alpha = \dim(\g_\alpha)$. We choose a system of positive roots $\Sigma_+$ and with respect to $\Sigma_+$, the positive Weyl chamber
$\mathfrak{a}_+ = \left\{X\in \mathfrak{a} ~|~ \alpha(X)>0,\:\:  \txt{ for all } \alpha \in \Sigma_+\right\}$. 
We set
\bes
\mathfrak{n}= \oplus_{\alpha \in \Sigma_+}  ~ \mathfrak{g}_{\alpha}.
\ees 
Then $\mathfrak{n}$ is a nilpotent subalgebra of $\g$ and we obtain the Iwasawa decomposition $\g = \mathfrak{k} \oplus \mathfrak{a} \oplus \mathfrak{n}$. If $N=\exp \mathfrak{n}$ and $A= \exp \mathfrak{a}$ then $N$ is a nilpotent Lie group and $A$ normalizes $N$. For the group $G$, we now have the Iwasawa decomposition 
$G= KAN$, that is, every $g\in G$ can be uniquely written as 
\bes
g=\kappa(g)\exp H(g)\eta(g), \:\:\:\: \kappa(g)\in K, H(g)\in \mathfrak{a}, \eta(g)\in N,
\ees 
and the map 
\bes
(k, a, n) \mapsto kan
\ees 
is a global diffeomorphism of $K\times A \times N$ onto $G$. Let $\rho=\frac{1}{2}\sum_{\alpha\in \Sigma_+}m_{\alpha}\alpha$ be the half sum of positive roots counted with multiplicity.
Let $M'$ and $M$ be the normalizer and centralizer of $\mathfrak{a}$ in $K$ respectively.
Then $M$ is a normal subgroup of $M'$ and normalizes $N$. The quotient group $W = M'/M$ is a finite group, called the Weyl group of the pair $(\g, \mathfrak{k})$. $W$ acts on $\mathfrak{a}$ by the adjoint action. It is known that $W$ acts as a group of orthogonal transformation (preserving the Cartan-Killing form) on $\mathfrak{a}$. Each $w\in W$ permutes the Weyl chambers and the action of $W$ on the Weyl chambers is simply transitive. Let $A_+= \exp{\mathfrak{a_+}}$. Since $\exp: \mathfrak{a} \to A$ is an isomorphism we can identify $A$ with $\R^d$. Let $\overline{A_+}$ denote the closure of $A_+$ in $G$. One has the polar decomposition $G=K A K$,
that is, each $g\in G$ can be written as 
\bes
g=k_1 (\exp Y) k_2, \:\:  k_1, k_2 \in K, Y\in \mathfrak{a}.
\ees 
In the above decomposition, the $A$ component of $g$ is uniquely determined modulo $W$. In particular, it is well defined in $\overline{A_+}$. The map $(k_1, a, k_2)\mapsto k_1ak_2$ of $K\times A\times K$ into $G$ induces a diffeomorphism of $K/M\times A_+\times K$ onto an open dense subset of $G$. We extend the inner product on $\mathfrak{a}$ induced by $B$ to $\mathfrak{a}^*$ by duality, that is, we set
\bes
\langle \la, \mu \rangle =B(Y_\la, Y_\mu), \:\:\:\: \la, \mu \in \mathfrak{a}^*,  ~ Y_\la, Y_\mu \in \mathfrak{a},
\ees
where $Y_\la$ is the unique element in $\mathfrak{a}$ such that 
\bes
\la(Y) = B(Y_\la, Y), \:\:\:\: \txt{ for all } Y\in \mathfrak{a}.
\ees
This inner product induces a norm, denoted by $\|\cdot\|_B$, on $\mathfrak{a}^*$,
\bes
\|\la\|_B = \langle \la, \la \rangle^{\frac{1}{2}}, \:\:\:\: \la \in \mathfrak{a}^*.
\ees
The elements of the Weyl group $W$ act on $\mathfrak a^*$ by the formula
\bes
sY_{\la}=Y_{s\la},\:\:\:\:\:\:s\in W,\:\la\in\mathfrak a^*.
\ees
Let $\mathfrak{a}_\C^*$ denote the complexification of $\mathfrak{a}^*$, that is, the set of all complex-valued real linear functionals on $\mathfrak{a}$. If $\la: \mathfrak{a} \to \C$ is a real linear functional then $\Re \la: \mathfrak{a} \to \R$ and $\Im \la: \mathfrak{a} \to \R$, given by 
\beas
&&\Re \la(Y)= \txt{ Real part of } \la(Y), \:\:\:\: \txt{ for all } Y\in \mathfrak{a}, \\
&& \Im \la(Y)= \txt{ Imaginary part of } \la(Y), \:\:\:\: \txt{ for all } Y\in \mathfrak{a},
\eeas
are real-valued linear functionals on $\mathfrak{a}$ and $\la = \Re \la + i \Im \la$. The usual extension of $B$ to $\mathfrak{a}_\C^*$, using conjugate linearity is also denoted by $B$. Hence $\mathfrak{a}_\C^*$ can be naturally identified with $\C^d$ and we set
\bes
\|\la\|_B= \left(\|\Re \la\|_B^2 + \|\Im \la\|_B^2\right)^{\frac{1}{2}},\:\:\:\ \la \in \mathfrak{a}_\C^*.
\ees
Through the identification of $A$ with $\R^d$, we use the Lebesgue measure on $\R^d$ as the Haar measure $da$ on $A$. As usual, on the compact group $K$, we fix the normalized Haar measure $dk$ and $dn$ denotes a Haar measure on $N$. The following integral formulae describe the Haar measure of $G$ corresponding to the Iwasawa and polar decomposition respectively.
For any $f\in C_c(G)$,
\beas
\int_{G}{f(g)dg} &=& \int_K \int_{\mathfrak{a}}\int_N f(k\exp Y n)~e^{2\rho(Y)}~dn~dY~dk \\ 
&=&\int_{K}{\int_{\overline{A_+}}{\int_{K}{f(k_1ak_2) ~ J(a)~dk_1~da~dk_2}}},
\eeas
where $dY$ is the Lebesgue measure on $\R^d$ and 
\bes 
J(\exp Y)= c \prod_{\alpha\in \Sigma_+}\left(\sinh\alpha(Y)\right)^{m_{\alpha}}, \:\:\:\: \txt{ for } Y\in \overline{\mathfrak{a}_+},
\ees
$c$ being a normalizing constant. It follows that 
\be \label{jest} 
J(\exp Y) \leq C e^{2\|\rho\|_B \|Y\|_B}, \:\:\:\: \txt{ for all } Y\in \overline{\mathfrak{a}_+}.
\ee 
If $f$ is a function on $X= G/K$ then $f$ can be thought of as a function on $G$ which is right invariant under the action of $K$. It follows that on $X$ we have a $G$ invariant measure $dx$ such that 
\bes
\int_X f(x)~dx= \int_{K/M}\int_{\mathfrak{a}_+}f(k\exp Y)~J(\exp Y)~dY~dk_M,
\ees
where $dk_M$ is the $K$-invariant measure on $K/M$. We shall also need the following integral formula (\cite{H2}, Chapter 1, Lemma 5.19): if $F\in L^1(K)$ and $g\in G$ then
\be \label{Kintformula}
\int_K F\left( \kappa(g^{-1}k) \right) ~ dk =
\int_K F(k) ~ e^{-2\rho \left(H(gk) \right)} ~ dk.
\ee  
In \cite{H2} this was proved for $F \in C(K)$ but the proof works for $F\in L^1(G)$ as well.

For a sufficiently nice function $f$ on $X$, its Fourier transform $\widetilde{f}$ is a function defined on $\mathfrak{a}_{\C}^* \times K$ given by 
\be \label{hftdefn}
\widetilde{f}(\la,k) = \int_{G} f(g) e^{(i\la - \rho)H(g^{-1}k)} dg,\:\:\:\:\:\: \la \in \mathfrak{a}_{\C}^*,\:\: k \in K, 
\ee
whenever the integral exists (\cite{H1}, P. 199). 
As $M$ normalizes $N$ the function $k\mapsto\widetilde{f}(\la, k)$ is right $M$-invariant.
It is known that if $f\in L^1(X)$ then $\widetilde{f}(\la, k)$ is a continuous function of $\la \in \mathfrak{a}^*$, for almost every $k\in K$. If in addition $\widetilde{f}\in L^1(\mathfrak{a}^*\times K, |{\bf c}(\la)|^{-2}~d\la~dk)$ then the following Fourier inversion holds,
\be\label{hft}
f(gK)= |W|^{-1}\int_{\mathfrak{a}^*\times K}\widetilde{f}(\la, k)~e^{-(i\la+\rho)H(g^{-1}k)} ~ |{\bf c}(\la)|^{-2}d\la~dk,
\ee
for almost every $gK\in X$ (\cite{H1}, Theorem 1.8, Theorem 1.9). Here ${\bf c}(\la)$ denotes Harish Chandra's ${\bf c}$-function and $|W|$ is the number of elements in the Weyl group. Moreover, $f \mapsto \widetilde{f}$ extends to an isometry of $L^2(X)$ onto $L^2(\mathfrak{a}^*_+\times K, |{\bf c}(\la)|^{-2}~d\la~dk )$ (\cite{H1}, Theorem 1.5).
\begin{rem} \label{cprop} 
It is known (\cite{A0}, P. 394, \cite{CGM}, P. 117) that for all $\|\la\|_B\geq 1$, $\la\in\mathfrak a_+^*$ there exists a positive number $C$ such that 
\be
|{\bf c}(\la)|^{-2}\leq C \|\la\|_B^{~\text{dim }\mathfrak n}.\label{clambdaest}
\ee
If $\text{rank}(X)=1$, then a similar lower estimate holds (\cite{ADY}, P. 653), that is, there exist two positive numbers $C_1$ and $C_2$ such that for all $\la\geq 1$
\be
C_1\la^{\text{dim }\mathfrak n}\leq |{\bf c}(\la)|^{-2}\leq C_2 \la^{\text{dim }\mathfrak n}.\label{clambdaestone}
\ee
\end{rem}
We now specialize to the case of $K$-biinvariant function $f$ on $G$.
We shall denote the set of $K$-biinvariant functions in $L^1(G)$ by $L^1(K \backslash G/K)$. Using the polar decomposition of $G$ we may view a function $f\in L^1(K\backslash G/K)$ as a function on $A_+$, or by using the inverse exponential map we may also view $f$ as a function on $\mathfrak{a}$ solely determined by its values on $\mathfrak{a}_+$. If $f\in L^1(K\backslash G/K)$ then the Fourier transform $\widetilde{f}$ takes a special form. It can be easily shown that in this case
\be \label{hlsphreln}
\widetilde f(\la, k) = \int_Gf(g)\phi_{-\la}(g)~dg,
\ee
where 
\be \label{philambda} 
 \phi_\la(g) 
= \int_K e^{-(i\la+ \rho) \big(H(g^{-1}k)\big)}~dk,  
\ee
for  $\la \in \mathfrak{a}_\C^*$, is Harish Chandra's elementary spherical function.

Let $\mathcal{U}(\g)$ be the universal enveloping algebra of $G$. The elements of $\mathcal{U}(\g)$ act on $C^\infty(G)$ as differential operators on both sides. We shall write $f(E: g: D)$, for the action of $(E, D)\in \mathcal{U}(\g) \times \mathcal{U}(\g)$ on $f\in C^\infty(G)$ at $g\in G$. Precisely, if $E= E_1 E_2 \cdots E_l, D=D_1 D_2 \cdots D_q$, $E_j, D_j \in \mathfrak g$ then 
\beas
f(E:g:D) &=& \left(\frac{\partial}{\partial t_1 }\cdots \frac{\partial}{\partial t_l} \frac{\partial}{\partial s_1 } \cdots \frac{\partial}{\partial s_q } \right)\bigg|_{t_1= \cdots = t_l = s_1= \cdots = s_q =0}\\
&& f \left(\exp s_1D_1 \cdots \exp s_q D_q  g  \exp t_1E_1 \cdots \exp t_l E_l \right).
\eeas
We now list down some well known properties of the elementary spherical functions which are important for us (\cite{GV}, Prop 3.1.4 and Chapter 4, \S 4.6; \cite{H1}, Lemma 1.18, P. 221).
\begin{thm} \label{philambdathm}
\begin{enumerate}
\item[1)] $\phi_\la(g)$ is $K$-biinvariant in $g\in G$ and $W$-invariant in $\la\in \mathfrak{a}_\C^*$.
\item[2)] $\phi_\la(g)$ is $C^\infty$ in $g\in G$ and holomorphic in $\la\in \mathfrak{a}_\C^*$.
\item[3)] For all $\la\in \overline{\mathfrak{a}_+^*}$ we have
\be
|\phi_\la(g)| \leq  \phi_0(g)\leq 1.\label{phi0}
\ee
\item[4)] For all $Y\in \overline{\mathfrak{a}_+}$ and $\la \in \overline{\mathfrak{a}_+^*}$
\be\label{phiila}
0 < \phi_{i \la}(\exp Y) \leq e^{\la(Y)} \phi_0(\exp Y).
\ee
\item[5)] Given $E, D \in \mathcal{U}(g)$ there exists a positive constant $C_{D, E}$ such that 
\bes
|\phi_\la(E:g:D)|\leq  C_{D, E} \left(1+\|\la\|_B \right)^{\text{deg }E + \text{deg }D}  \phi_0(g), \:\: \la \in \mathfrak{a}^*. 
\ees
\item[6)] For $\la\in \mathfrak a^*$
\bes
\phi_{-\la}(hg) = \int_K e^{(i\la - \rho)\big(H(g^{-1}k) \big)}e^{-(i\la+\rho)\big(H(hk) \big)}~dk, \:\:\:\: g, h \in G,
\ees
\end{enumerate}
\end{thm}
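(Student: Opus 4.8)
The plan is to derive all six assertions from the integral formula~(\ref{philambda}) for $\phi_\la$, together with a few standard facts about the Iwasawa projection $H\colon G\to\mathfrak a$: the identities $H(gn)=H(g)$ for $n\in N$ and $H(ga)=H(g)+\log a$ for $a\in A$, from which one reads off the behaviour of $H$ under products once a factor is Iwasawa-decomposed; the integration formula~(\ref{Kintformula}) on $K$; the real-analyticity of $g\mapsto H(g^{-1}k)$ with locally bounded derivatives; and the convexity of $k\mapsto H(gk)$. The two genuinely spectral facts --- the $W$-invariance in $\la$ of Part~1) and the polynomial growth estimate of Part~5) --- instead rest on Harish-Chandra's theory and are taken from \cite{GV}, Prop.~3.1.4 and Chapter~4, \S4.6, and \cite{H1}, Lemma~1.18.

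For Part~1), left- and right-$K$-invariance of $g\mapsto\phi_\la(g)$ are immediate from~(\ref{philambda}): replacing $g$ by $k_0g$ and substituting $k\mapsto k_0k$ uses only invariance of $dk$, and replacing $g$ by $gk_0$ uses $H(k_0^{-1}x)=H(x)$. The $W$-invariance in $\la$ is invisible from the integral and follows from the characterization of $\phi_\la$ as the unique $K$-biinvariant joint eigenfunction of the algebra of $G$-invariant differential operators on $X$ normalized by $\phi_\la(e)=1$, with eigenvalues given by the $W$-invariant Harish-Chandra homomorphism. For Part~2), smoothness in $g$ and holomorphy in $\la$ follow by differentiating under the integral sign in~(\ref{philambda}), justified by dominated convergence since for $k\in K$ the function $H(g^{-1}k)$ is smooth in $g$ with locally bounded derivatives and $e^{-(i\la+\rho)(H(g^{-1}k))}$ is entire in $\la$ with local bounds uniform in $k$. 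For Part~6), the product rule for $H$ gives $H(g^{-1}h^{-1}k)=H(g^{-1}\kappa(h^{-1}k))+H(h^{-1}k)$, while $H(h^{-1}k)=-H(h\kappa(h^{-1}k))$; hence the integrand of $\phi_{-\la}(hg)=\int_K e^{(i\la-\rho)(H(g^{-1}h^{-1}k))}\,dk$ is a function of $\kappa(h^{-1}k)$ alone, and applying the substitution~(\ref{Kintformula}) with $h$ in place of $g$ produces exactly the claimed formula.

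For the estimates, in Part~3) one uses that for real $\la$, $|e^{-(i\la+\rho)(H(g^{-1}k))}|=e^{-\rho(H(g^{-1}k))}$, so $|\phi_\la(g)|\le\int_K e^{-\rho(H(g^{-1}k))}\,dk=\phi_0(g)$; and $\phi_0(g)\le1$ follows from~(\ref{Kintformula}) with $F\equiv1$, which gives $\int_K e^{-2\rho(H(g^{-1}k))}\,dk=1$, combined with the Cauchy--Schwarz inequality $\phi_0(g)\le\big(\int_K e^{-2\rho(H(g^{-1}k))}\,dk\big)^{1/2}=1$. For Part~4), (\ref{philambda}) gives $\phi_{i\la}(\exp Y)=\int_K e^{(\la-\rho)(H(\exp(-Y)k))}\,dk$, so positivity is clear and the exponential bound reduces to controlling $\la(H(\exp(-Y)k))$ from above by $\la(Y)$, which is a consequence of the convexity properties of the Iwasawa projection established in the cited references. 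For Part~5), differentiating~(\ref{philambda}) a total of $m=\deg E+\deg D$ times, each derivative either falls on the exponential --- producing a factor bounded by a constant times $(1+\|\la\|_B)$ times a derivative of $H(g^{-1}\cdot)$ --- or is absorbed into structure constants; keeping the term of top degree in $\la$ and dominating the rest by $e^{-\rho(H(g^{-1}k))}$ gives $|\phi_\la(E:g:D)|\le C_{D,E}(1+\|\la\|_B)^m\phi_0(g)$, the uniform bounds on the derivatives of $H$ being precisely the analytic input supplied by \cite{GV}.

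The step I expect to be the real obstacle is not any single computation but the two places where the integral formula is genuinely insufficient: the $W$-invariance in Part~1), which forces one to bring in the algebra of invariant differential operators and the $W$-invariance of the Harish-Chandra isomorphism, and the sharp polynomial estimate in Part~5), which needs Harish-Chandra's uniform control of the Iwasawa projection and all of its derivatives. The convexity input for Part~4) is likewise nontrivial, though classical. Accordingly, for these points we would not reprove anything but simply invoke \cite{GV} and \cite{H1}, presenting only the elementary derivations sketched above.
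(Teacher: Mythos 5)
The paper gives no proof of this theorem; it is presented as a list of well-known facts with citations to \cite{GV} (Prop.\ 3.1.4 and Ch.\ 4, \S 4.6) and \cite{H1} (Lemma 1.18). Your sketch is consistent with that: the elementary derivations from the integral representation~(\ref{philambda}) and the Iwasawa cocycle are correct --- in particular your Part~6 argument via $H(g^{-1}h^{-1}k)=H(g^{-1}\kappa(h^{-1}k))+H(h^{-1}k)$, $H(h^{-1}k)=-H(h\kappa(h^{-1}k))$, and the substitution~(\ref{Kintformula}), as well as the Cauchy--Schwarz step for $\phi_0\le 1$ in Part~3 --- while for the $W$-invariance in $\lambda$, the convexity input for Part~4, and the derivative estimate of Part~5 you invoke exactly the same references the paper does, so your treatment matches the paper's in substance while filling in the routine computations.
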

\begin{rem}\label{hftproperitesrem}
If $\la \in \mathfrak{a}^*$ and $f\in L^1(X)$ then
\bes
\int_K |\widetilde f(\la, k)|dk= \int_K \left|\int_X f(g)e^{(i\la - \rho) \left( H(g^{-1}k) \right)}dg\right|dk \leq  \int_X |f(g)|\phi_0(g)dg<\infty, 
\ees
by Theorem \ref{philambdathm}, 3). Hence, the function $k\mapsto \widetilde f(\la, k)$ is integrable on $K$.
\end{rem}
For $f \in L^1(K\backslash G /K)$, we define its spherical Fourier transform $\widehat f(\la)$ by the integral 
\bes
\widehat f(\la)= \int_Gf(g)\phi_{-\la}(g)~dg.
\ees 
If $f$ is $K$-biinvariant then by (\ref{hlsphreln}) the  Fourier transform $\widetilde{f}$ coincides with the spherical Fourier transform $\widehat{f}$. If $F\in L^1(G/K)$ and $f\in L^1(K\backslash G/K)$ then it is easy to see that $F*f \in L^1(G/K)$ and the following holds
\be \label{hsprodrln}
(F*f\widetilde{)}(\la, k) = \widehat f(\la) \widetilde F(\la, k).
\ee
We shall now state the Paley-Wiener theorem for the spherical Fourier transform. For a positive real number $L$ let $\mathcal{H}^L(\mathfrak{a}^*_{\C})_W$ be the space of $W$-invariant, entire functions $h$ on $\mathfrak a_{\C}^*$ such that for each $N\in\N$ 
\bes
| h(\la)| \leq C_N\frac{e^{L \|\Im \la\|_B }}{(1+\|\la\|_B)^N}, \:\:\:\:\:\: \la \in \mathfrak{a}_{\C}^*,
\ees
and 
\bes
\mathcal{H}(\mathfrak{a}_{\C}^*)_W= \bigcup_{L>0} \mathcal{H}^L(\mathfrak{a}_{\C}^*)_W.
\ees
\begin{thm} [\cite{Ga}, \cite{H2}, Theorem 7.1]\label{PWsperical}
The spherical Fourier transform $f \mapsto \widehat{f}$ is a bijection from $C_c^{\infty}(K \backslash G /K)$ onto  $\mathcal{H}(\mathfrak{a}^*_{\C})_W$ and $\textit{supp }f\subset\overline{{\mathcal B}(o,L)} $ if and only  if $\widehat f \in \mathcal{H}^L(\mathfrak{a}^*_{\C})_W$.
\end{thm}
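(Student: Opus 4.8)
The statement contains three assertions: that $f\mapsto\widehat f$ is injective; that $\operatorname{supp} f\subset\overline{\mathcal B(o,L)}$ forces $\widehat f\in\mathcal H^L(\mathfrak a^*_\C)_W$ (the ``easy'' direction); and that every $h\in\mathcal H^L(\mathfrak a^*_\C)_W$ equals $\widehat f$ for some $f\in C_c^\infty(K\backslash G/K)$ with $\operatorname{supp} f\subset\overline{\mathcal B(o,L)}$ (the ``hard'' direction). The last two together supply the surjectivity onto $\mathcal H(\mathfrak a^*_\C)_W=\bigcup_{L>0}\mathcal H^L(\mathfrak a^*_\C)_W$ and the support refinement, and injectivity is immediate from the spherical inversion formula recorded in Section 3; so the substance is the two support statements.

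For the easy direction, let $f\in C_c^\infty(K\backslash G/K)$ with $\operatorname{supp} f\subset\overline{\mathcal B(o,L)}$. Since $\la\mapsto\phi_{-\la}(g)$ is holomorphic on $\mathfrak a^*_\C$ (Theorem \ref{philambdathm}(2)), differentiating under the integral in $\widehat f(\la)=\int_G f(g)\phi_{-\la}(g)\,dg$ shows $\widehat f$ is entire, and $\phi_{w\la}=\phi_\la$ (Theorem \ref{philambdathm}(1)) makes $\widehat f$ $W$-invariant. Inspecting the integral defining $\phi_{-\la}$ and using its $W$-invariance in $\la$ together with (\ref{phi0}) and (\ref{phiila}) (reducing to the $A$-component of $g$ by the polar decomposition) gives $|\phi_{-\la}(g)|\le e^{\|\Im\la\|_B\,\mathsf d(o,gK)}$, whence $|\widehat f(\la)|\le\|f\|_{L^1(G)}\,e^{L\|\Im\la\|_B}$ on $\mathfrak a^*_\C$. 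For the rapid decay one appeals to the Harish-Chandra homomorphism: to each $W$-invariant polynomial $p$ on $\mathfrak a^*_\C$ there corresponds a $G$-invariant differential operator $D_p$ on $X$ with $D_p\phi_\la=p(\la)\phi_\la$, so $p(\la)\widehat f(\la)=\widehat{D_p f}(\la)$ by integration by parts, and since $D_p f$ is again smooth with support in $\overline{\mathcal B(o,L)}$ the preceding bound yields $|p(\la)|\,|\widehat f(\la)|\le C_p\,e^{L\|\Im\la\|_B}$. Choosing finitely many $W$-invariant polynomials $p_1,\dots,p_m$ of a common degree $M$ whose only common zero is the origin (e.g.\ suitable powers of the basic invariants), homogeneity gives $\sum_j|p_j(\la)|\ge c_M\|\la\|_B^M$ on all of $\mathfrak a^*_\C$, hence $|\widehat f(\la)|\le C_M\,e^{L\|\Im\la\|_B}(1+\|\la\|_B)^{-M}$; as $M$ is arbitrary, $\widehat f\in\mathcal H^L(\mathfrak a^*_\C)_W$.

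For the hard direction, given $h\in\mathcal H^L(\mathfrak a^*_\C)_W$ put
\bes
f(g)=|W|^{-1}\int_{\mathfrak a^*}h(\la)\,\phi_\la(g)\,|{\bf c}(\la)|^{-2}\,d\la.
\ees
That this integral converges absolutely and that $f$ is smooth and $K$-biinvariant is routine: differentiating under the integral and bounding by Theorem \ref{philambdathm}(5) and the polynomial estimate (\ref{clambdaest}) on $|{\bf c}(\la)|^{-2}$, against the rapid decay of $h$, controls every derivative uniformly on compact sets. The identity $\widehat f=h$ is the spherical Fourier inversion theorem applied to $f$. Everything thus reduces to showing $\operatorname{supp} f\subset\overline{\mathcal B(o,L)}$, i.e.\ $f(\exp Y)=0$ whenever $Y\in\overline{\mathfrak a_+}$ with $\|Y\|_B>L$. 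Here I would insert the Harish-Chandra expansion $\phi_\la(\exp Y)=\sum_{w\in W}{\bf c}(w\la)\,\Phi_{w\la}(\exp Y)$, with $\Phi_\la(\exp Y)=e^{(i\la-\rho)(Y)}\sum_{\mu}\Gamma_\mu(\la)\,e^{-\mu(Y)}$, use the $W$-invariance of $h$ and of ${\bf c}(\la){\bf c}(-\la)$ (which is $W$-invariant because $\overline{{\bf c}(\la)}={\bf c}(-\la)$ on $\mathfrak a^*$ and $|{\bf c}(\la)|^{-2}$ is $W$-invariant) to collapse the Weyl sum, the $w$-th term reducing to $\Phi_{w\la}(\exp Y)/{\bf c}(-w\la)$, and change variables $\la\mapsto w^{-1}\la$ termwise, reaching
\bes
f(\exp Y)=c\int_{\mathfrak a^*}h(\la)\,\frac{\Phi_\la(\exp Y)}{{\bf c}(-\la)}\,d\la.
\ees
One then shifts the contour $\mathfrak a^*$ to $\mathfrak a^*+it\omega_0$ for a suitable $\omega_0\in\overline{\mathfrak a^*_+}$ with $\omega_0(Y)>L$ and lets $t\to\infty$: the factor $e^{i\la(Y)}$ in $\Phi_\la$ then carries a decay $e^{-t\omega_0(Y)}$ while the Paley-Wiener estimate on $h$ grows only like $e^{Lt}$, forcing the integral to $0$.

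The main obstacle is exactly this contour shift. As $1/{\bf c}(-\la)$ is only meromorphic and the series $\Phi_\la$ has singularities in $\la$ along translated root hyperplanes, one must check that the integrand $h(\la)\Phi_\la(\exp Y)/{\bf c}(-\la)$ --- equivalently the original $W$-symmetric integrand $h(\la)\phi_\la(\exp Y)|{\bf c}(\la)|^{-2}$ --- is holomorphic throughout the tube region swept out by the shift (the poles cancelling between $\Phi_\la$ and $1/{\bf c}(-\la)$), that the defining series of $\Phi_\la$ converges with the required uniform estimates after the shift for $Y$ in the open chamber (with extra care as $Y$ nears a wall), and that no residues are collected along the way. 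All of this is packaged more geometrically through the Abel (horocycle) transform $\mathcal A f(H)=e^{\rho(H)}\int_N f(n\exp H)\,dn$, which intertwines the spherical transform with the Euclidean Fourier transform on $\mathfrak a$ and is a topological isomorphism of $C_c^\infty(K\backslash G/K)$ onto the $W$-invariant functions in $C_c^\infty(\mathfrak a)$ that, crucially, neither enlarges nor (in its inverse) enlarges supports. Granting this, the Euclidean Paley-Wiener theorem produces $\varphi\in C_c^\infty(\mathfrak a)$, necessarily $W$-invariant, with $\operatorname{supp}\varphi\subset\overline{B(0,L)}$ and Euclidean Fourier transform $h$, and then $f:=\mathcal A^{-1}\varphi$ is the required preimage. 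We refer to \cite{Ga} and \cite{H2} for the proof of the support-preservation of the inverse Abel transform.
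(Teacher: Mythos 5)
The paper states Theorem \ref{PWsperical} as a citation to Gangolli \cite{Ga} and to Helgason \cite{H2}, Theorem 7.1, and gives no proof of its own, so there is nothing in the paper to compare your sketch against line by line. On its own terms, your treatment of the easy direction is correct: holomorphy and $W$-invariance of $\widehat f$ come from differentiating under the integral and from $\phi_{w\la}=\phi_\la$; the exponential bound $|\phi_{-\la}(g)|\le e^{\|\Im\la\|_B\,\mathsf d(o,gK)}$ follows from (\ref{phi0}), (\ref{phiila}) and the polar decomposition; and the rapid decay via the Harish-Chandra isomorphism is the standard device (the one point needing care, which you handle, is that a single $W$-invariant polynomial such as $\langle\la,\la\rangle$ does not control $\|\la\|_B$ on $\mathfrak a^*_\C$, so you need several homogeneous invariants of a common degree with only the origin as their common zero). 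Your collapse of the Weyl sum to $c\int_{\mathfrak a^*}h(\la)\,\Phi_\la(\exp Y)/{\bf c}(-\la)\,d\la$, using the $W$-invariance of $h$ and of ${\bf c}(\la){\bf c}(-\la)$, is also right.

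Be careful, though, about the closing move. The Abel transform intertwines the spherical transform with the Euclidean Fourier transform (Theorem \ref{Abelthms}), and the Euclidean Paley--Wiener theorem identifies $\mathcal H^L(\mathfrak a^*_\C)_W$ with the Fourier transforms of $W$-invariant $C_c^\infty$ functions on $\mathfrak a$ supported in $\overline{B(0,L)}$. Consequently ``$\mathcal A^{-1}$ does not enlarge supports'' is not an independent lemma that shortcuts the analysis: it is logically equivalent to the hard direction of Theorem \ref{PWsperical} you set out to prove. So your final paragraph reformulates, rather than resolves, the difficulty you correctly flagged (poles of $1/{\bf c}(-\la)$, singularities and convergence of the Harish-Chandra series $\Phi_\la$, behavior near the walls). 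Either that contour-shift/residue analysis, or a direct support argument for $\mathcal A^{-1}$, is what \cite{Ga} and \cite{H2} actually carry out and must be done somewhere. Since the paper itself treats the theorem as known, deferring this to the same references is consistent with the paper's usage; just don't read the Abel-transform passage as supplying a proof.
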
 
One observes that $\mathcal{H}(\mathfrak{a}_\C^*)_W$ is also the image of $C_c^\infty(\mathfrak{a})_W$ under the Euclidean Fourier transform, where 
\bes
C_c^\infty(\mathfrak{a})_W= \{f\in C_c^\infty(\mathfrak{a})~|~ f(w\cdot Y)= f(Y), \:\: \txt{ for all } Y\in \mathfrak a, w\in W\}.
\ees
This is related to the fact that the spherical Fourier transform and the Euclidean Fourier transform on $\mathfrak{a}$ are related by the so-called Abel transform. For $f\in L^1(K\backslash G/K)$ its Abel transform ${\mathcal A}f$ is defined by the integral 
\bes
{\mathcal A}f(\exp Y)= e^{\rho(Y)}\int_Nf((\exp Y) n)~ dn, \:\:\:\: Y\in \mathfrak{a},
\ees 
(\cite{GV}, P. 107, \cite{H3}, P. 27). We will need the following theorem
(\cite{GV}, Prop 3.3.1, Prop 3.3.2).
\begin{thm} \label{Abelthms}
The map ${\mathcal A}: C_c^\infty(K\backslash G/K)\to C_c^\infty(\mathfrak{a})_W$ is a bijection.
If $f\in C_c^\infty(K\backslash G/K)$ then
\be \label{Abelftreln}
\mathcal{F}\big({\mathcal A}f\big)(\la)= \widehat f(\la), \:\: \la\in \mathfrak{a}^*, 
\ee
where $\mathcal{F}({\mathcal A}f)$ denotes the Euclidean Fourier transform on $\mathfrak a\cong \R^d$.
\end{thm}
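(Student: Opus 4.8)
The plan is to establish the Fourier-transform identity (\ref{Abelftreln}) first, and then to obtain the bijectivity of $\mathcal A$ as a formal consequence of it, by playing the spherical Paley--Wiener theorem (Theorem \ref{PWsperical}) against the classical Euclidean Paley--Wiener theorem on $\mathfrak a\cong\R^d$.

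To establish (\ref{Abelftreln}), I would start from $\widehat f(\la)=\int_G f(g)\phi_{-\la}(g)\,dg$ and use the alternative integral representation $\phi_{-\la}(g)=\int_K e^{-(i\la+\rho)(H(gk))}\,dk$ of the spherical function (which follows from Theorem \ref{philambdathm}(6) by specializing one of the two group variables to the identity and using $H(k)=0$ for $k\in K$). Substituting this in, interchanging the order of integration (legitimate since $\int_G|f|\,\phi_0<\infty$ by Theorem \ref{philambdathm}(3)), and making the measure-preserving change of variable $g\mapsto gk^{-1}$ --- which leaves $f$ unchanged, by right $K$-invariance, and replaces $H(gk^{-1}k)$ by $H(g)$ --- collapses the integration over $K$ and gives $\widehat f(\la)=\int_G f(g)\,e^{-(i\la+\rho)(H(g))}\,dg$. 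Now applying the Iwasawa integration formula to the integrand $f(g)e^{-(i\la+\rho)(H(g))}$ and using $H(k\exp Y\,n)=Y$ together with the left $K$-invariance of $f$, the Jacobian factor $e^{2\rho(Y)}$ partly cancels $e^{-\rho(Y)}$ while the $K$-integration contributes $1$, leaving $\widehat f(\la)=\int_{\mathfrak a}e^{-i\la(Y)}\bigl(e^{\rho(Y)}\int_N f(\exp Y\,n)\,dn\bigr)\,dY$, which is precisely $\mathcal F(\mathcal A f)(\la)$.

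Next I would verify that $\mathcal A$ really maps $C_c^\infty(K\backslash G/K)$ into $C_c^\infty(\mathfrak a)_W$. Smoothness of $\mathcal A f$ follows by differentiating under the integral sign, noting that as $Y$ ranges over a compact set the relevant $n$'s stay in a fixed compact subset of $N$ (since $\txt{supp}\,f$ is compact and $(Y,n)\mapsto\exp Y\,n$ is continuous); the support of $\mathcal A f$ is contained in the compact set $\{H(g):g\in\txt{supp}\,f\}\subset\mathfrak a$, so $\mathcal A f\in C_c^\infty(\mathfrak a)$; and its $W$-invariance is immediate from (\ref{Abelftreln}), because $\widehat f$ is $W$-invariant by Theorem \ref{philambdathm}(1) while the Euclidean Fourier transform is $W$-equivariant and injective on $C_c^\infty(\mathfrak a)$. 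Injectivity of $\mathcal A$ is then clear: $\mathcal A f=0$ forces $\widehat f=\mathcal F(\mathcal A f)=0$, hence $f=0$ by Theorem \ref{PWsperical}. For surjectivity, given $\phi\in C_c^\infty(\mathfrak a)_W$, its Euclidean Fourier transform $\mathcal F\phi$ lies in $\mathcal H(\mathfrak a_\C^*)_W$, with $\txt{supp}\,\phi\subset\overline{B(0,L)}$ corresponding to $\mathcal F\phi\in\mathcal H^L(\mathfrak a_\C^*)_W$, so Theorem \ref{PWsperical} produces an $f\in C_c^\infty(K\backslash G/K)$ with $\widehat f=\mathcal F\phi$; then $\mathcal F(\mathcal A f)=\widehat f=\mathcal F\phi$ together with injectivity of the Euclidean transform gives $\mathcal A f=\phi$. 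Tracking the support data through this argument also yields the refinement that $\txt{supp}\,f\subset\overline{\mathcal B(o,L)}$ if and only if $\txt{supp}\,\mathcal A f\subset\overline{B(0,L)}$.

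I expect the identity (\ref{Abelftreln}) to be the real content of the proof: the delicate points are having the correct alternative integral representation of $\phi_{-\la}$ at hand and carefully justifying the interchange of integrations and the measure-preserving substitutions. Once (\ref{Abelftreln}) is available, the bijectivity of $\mathcal A$ follows softly by combining the spherical and Euclidean Paley--Wiener theorems with the injectivity and $W$-equivariance of the Euclidean Fourier transform.
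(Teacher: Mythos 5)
The paper does not prove Theorem \ref{Abelthms}; it simply cites \cite{GV}, Prop.\ 3.3.1 and Prop.\ 3.3.2, so there is no ``paper's proof'' to compare against directly. That said, your argument is correct and is essentially the standard derivation found in the literature. The computation of (\ref{Abelftreln}) is sound: the alternative representation $\phi_{-\la}(g)=\int_K e^{-(i\la+\rho)(H(gk))}\,dk$ does follow from Theorem \ref{philambdathm}(6) by taking one group variable to be $e$ and using $H(k)=0$; the Fubini interchange is justified by $\int_G|f|\phi_0<\infty$; the substitution $g\mapsto gk^{-1}$ is legitimate since $G$ is unimodular; and the Iwasawa integral formula together with $H(k\exp Y\,n)=Y$ and left $K$-invariance of $f$ collapses things to exactly $\mathcal F(\mathcal A f)(\la)$.

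Your route to bijectivity --- proving $\mathcal A$ maps into $C_c^\infty(\mathfrak a)_W$ directly, then reading off injectivity and surjectivity by playing Theorem \ref{PWsperical} against the Euclidean Paley--Wiener theorem via (\ref{Abelftreln}) and the injectivity of $\mathcal F$ --- is exactly how this is typically done once the spherical Paley--Wiener theorem (Gangolli's theorem) is available as an independent input, which it is here; there is no circularity since \cite{Ga} does not derive Paley--Wiener from the Abel transform. One small stylistic point: your deduction of $W$-invariance of $\mathcal A f$ from the $W$-invariance of $\widehat f$ and the $W$-equivariance and injectivity of $\mathcal F$ is clean but slightly indirect; it can also be checked directly from the definition of $\mathcal A$ using the action of representatives of $W$ in $M'$ on $N$ and $\mathfrak a$. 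Either way the argument is complete and correct.
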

\begin{rem}\label{trivial}
It is easy to see that a special case of Theorem \ref{symthm}, namely when $f\in C_c^{\infty}(K\backslash G/K)$, can be proved simply by using the slice projection theorem (\ref{sliceproj}) for the Euclidean Radon transform $\mathcal R$ and the relation (\ref{Abelftreln}) for the Abel transform ( see \cite{BS} for a more general result). However, this approach cannot be used to prove Theorem \ref{symthm}. The reason is that if an integrable $K$-biinvariant function $f$ vanishes on an open set then it is not necessarily true that ${\mathcal A}f$ also vanishes on an open subset of $\mathfrak a$. 
\end{rem}
We end this section with the notion of heat kernel $h_t$ on $X= G/K$ (see \cite{AO} for details). There exists a unique family $\{h_t\}_{t>0}\subset C^{\infty}(K\backslash G/K)$ which solves the heat equation and satisfies the following properties
\begin{enumerate}
\item[a)] For each $t>0$, $h_t$ is positive with 
$\|h_t\|_{L^1(G)}=1$ and $h_{t+s}= h_t*h_s$ for positive $t$ and $s$. 
\item[b)] If $f\in L^2(X)$ then for each $t>0$, the function $f*h_t$ is real analytic on $X$ (see \cite{KOS}).
\item[c)] The spherical Fourier transform of $h_t$ is given by 
\bes
\widehat{h_t}(\la) = e^{-t(\|\la\|_B^2 + \|\rho\|_B^2)}, \:\:\:\:\:\:  \la \in \mathfrak{a}^*.
\ees 
\end{enumerate} 
\begin{rem}\label{ranalyticity}
From a) and b) we observe that if $f\in L^1(X)$ then 
$f*h_t= (f*h_{t/2})* h_{t/2}$
is also real analytic as $f*h_{t/2}\in L^2(X)$. In particular, if $f\in L^1(X)$ is nonzero then $f*h_t$ (for any fixed $t\in (0, \infty)$) cannot vanish on any nonempty open subset of $X$. This follows from (\ref{hsprodrln}), the Fourier inversion ( \ref{hft}) and the fact that $\widehat{h_t}$ is nonzero everywhere on $\mathfrak a^*$.
\end{rem}

\section{Levinson's theorem on Riemannian symmetric spaces of noncompact type}
We start by defining certain function spaces which are analogous to those described in Section 2. Let $\psi$ be as in Section 2 and let $L$ be a given positive number. We define the following spaces of functions;
\beas
C_{\psi}(\mathfrak{a}^*)&=& \big\{f:\mathfrak{a}^*\to\C\mid f \txt{ is continuous}, \lim_{\|\la\|_B\to\infty}\frac{f(\la)}{e^{\psi(\|\la\|_B)}}=0 \big\},\\
E_L(\mathfrak{a}^*) &=& \big\{f:\mathfrak{a}^*\to\C\mid \text{$f$ is bounded on $\mathfrak a^*$, and has an entire extension to $\mathfrak a_{\C}^*$ with}  \\ 
& & \:\:\:\:\:\:\:\:\: |f(\la)|\leq Ce^{L\|\Im \la\|_B}, \:\: \la \in \mathfrak{a}^*_\C ~ \big\},\\
\Phi_L(\mathfrak{a}^*) &=&\text{span} \big\{\chi_x: \mathfrak{a}^*\to \C \mid x\in {\mathcal B}(o, L),~ \chi_x(\la)= \phi_\la(x),  \la \in \mathfrak{a}^*\big\},
\eeas
As before, we define
\bes
\|f\|_\psi=\text{sup}_{\la\in\mathfrak{a}^*}~\frac{|f(\la)|}{e^{\psi(\|\la\|_B)}},\:\:\:\:\:\:f\in C_\psi(\mathfrak{a}^*).
\ees
Clearly, $(C_{\psi}(\mathfrak{a}^*),\|\cdot\|_{\psi})$ is a normed linear space.
\begin{rem}\label{funcsp} 
\begin{enumerate}
\item[1)] It is clear that $E_L(\mathfrak{a}^*)\subseteq C_\psi(\mathfrak{a}^*)$. From the expression of $\widehat{h_t}$ it is also clear that $\widehat{h_t}\in C_{\psi}(\mathfrak a^*)$ for all $t\in (0,\infty)$.
\item[2)] It follows from Theorem \ref{philambdathm} that $\Phi_L(\mathfrak{a}^*) \subseteq E_L(\mathfrak{a}^*)$. In fact, writing $\la= \Re \la + i \Im\la \in \mathfrak{a}^*_\C$ and taking $x= k_1 \exp(Y) K\in {\mathcal B}(o, L)$, $Y\in \overline{\mathfrak{a}_+}, ~  k_1 \in K$ we get by (\ref{phi0}) and (\ref{phiila}) that 
\bes
|\phi_{ \Re \la + i \Im\la}(x)| \leq \phi_{i \Im \la}(\exp Y) \leq Ce^{\|\Im\la\|_B \|Y\|_B}.
\ees 
As $x \in {\mathcal B}(o, L)$ it follows that
\bes
|\phi_{\Re\la + i\Im\la}(x)| \leq Ce^{L\|\Im\la\|_B}.
\ees
Since for each $x\in X$, the function $\la \mapsto \phi_\la(x)$ is holomorphic in $\mathfrak{a}_\C^*$ (Theorem \ref{philambdathm}, 2)) the conclusion follows.
\item[3)] The Paley Wiener theorem (Theorem \ref{PWsperical}) tells us that if $\phi \in C_c^{\infty} (K\backslash G/ K)$ with $\txt{ supp } \phi \subseteq {\mathcal B}(o, L)$, then $\widehat{\phi} \in E_L({\mathfrak{a}^*})$. However, not all elements of $E_L(\mathfrak{a}^*)$  are of this form. This is because elements of $E_L(\mathfrak{a}^*)$ may not have polynomial decay on $\mathfrak a^*$.
\end{enumerate}
\end{rem}

Because of the identification of $\mathfrak{a}^*$ with $\R^d$ and $\mathfrak{a}_\C^*$ with $\C^d$ the following lemma follows straightway from Lemma \ref{levlem} and Remark \ref{levlemrem}.
\begin{lem} \label{Edense}
For each positive number $L$, $E_L(\mathfrak{a}^*)$ is dense in $(C_\psi(\mathfrak{a}^*), \|\cdot\|_\psi)$ if
\be\label{koosislemma}
\int_{1}^{\infty}\frac{\psi(r)}{r^2}dr= \infty.
\ee
\end{lem}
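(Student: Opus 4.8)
The plan is to transport the Euclidean statement of Remark \ref{levlemrem} to $\mathfrak{a}^*$ via the linear isometry furnished by the inner product $\langle\cdot,\cdot\rangle$ on $\mathfrak{a}^*$ induced by $B$, and to check only that the relevant function spaces match up under this identification. The entire analytic content already resides in Lemma \ref{levlem}; what remains is bookkeeping.

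First I would fix an orthonormal basis of $(\mathfrak{a}^*, \langle\cdot,\cdot\rangle)$, which yields a linear isomorphism $T:\mathfrak{a}^*\to\R^d$ with $\|T\la\| = \|\la\|_B$ for all $\la\in\mathfrak{a}^*$, where $\|\cdot\|$ denotes the Euclidean norm on $\R^d$. Its complexification $T_\C:\mathfrak{a}_\C^*\to\C^d$ is then a $\C$-linear isomorphism extending $T$ with $\Re(T_\C\la) = T(\Re\la)$ and $\Im(T_\C\la) = T(\Im\la)$, so that, by the definition of $\|\cdot\|_B$ on $\mathfrak{a}_\C^*$ recorded in Section $3$, one has $\|T_\C\la\| = \|\la\|_B$ for every $\la\in\mathfrak{a}_\C^*$.

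Next I would observe that the pullback $\Lambda: f\mapsto f\circ T^{-1}$ is a bijective linear isometry from $(C_\psi(\mathfrak{a}^*),\|\cdot\|_\psi)$ onto $(C_\psi(\R^d),\|\cdot\|_\psi)$: continuity is preserved, and since $\psi(\|\la\|_B) = \psi(\|T\la\|)$ the decay condition defining $C_\psi$ and the norm $\|\cdot\|_\psi$ are preserved verbatim. Moreover $\Lambda$ carries $E_L(\mathfrak{a}^*)$ onto $\mathcal{E}_L(\R^d)$: a bounded $f$ on $\mathfrak{a}^*$ has an entire extension $F$ to $\mathfrak{a}_\C^*$ if and only if $\Lambda f = f\circ T^{-1}$ admits the entire extension $F\circ T_\C^{-1}$ to $\C^d$, and the bound $|f(\la)|\leq Ce^{L\|\Im\la\|_B}$ translates, using $\|T_\C\la\| = \|\la\|_B$, into $|(\Lambda f)(z)|\leq Ce^{L\|\Im z\|}$, which by the Phragm\'en--Lindel\"of description of $\mathcal{E}_L(\R^d)$ quoted in Section $2$ is precisely the membership condition for $\mathcal{E}_L(\R^d)$.

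Finally, since $\psi$ satisfies (\ref{koosislemma}) (which is exactly (\ref{levlemcond})), Remark \ref{levlemrem} gives that $\mathcal{E}_L(\R^d)$ is dense in $(C_\psi(\R^d),\|\cdot\|_\psi)$; applying the isometric isomorphism $\Lambda^{-1}$ then shows that $E_L(\mathfrak{a}^*)$ is dense in $(C_\psi(\mathfrak{a}^*),\|\cdot\|_\psi)$. There is no genuine obstacle; the only point that wants a line of care is that the complexified identification $T_\C$ simultaneously respects real and imaginary parts and intertwines $\|\cdot\|_B$ on $\mathfrak{a}_\C^*$ with $\|\cdot\|$ on $\C^d$, which is immediate from the way $\|\cdot\|_B$ was extended to $\mathfrak{a}_\C^*$.
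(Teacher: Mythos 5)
Your proposal is correct and is essentially the paper's own argument: the paper simply states that the lemma ``follows straightway from Lemma \ref{levlem} and Remark \ref{levlemrem}'' via the identification of $\mathfrak{a}^*$ with $\R^d$ and $\mathfrak{a}_\C^*$ with $\C^d$. You have merely made that identification explicit by writing down the isometry $T$ and its complexification $T_\C$, which is a harmless and accurate elaboration of the same route.
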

We now consider the following Weyl group invariant subspaces of $C_\psi(\mathfrak{a}^*)$ and $E_L(\mathfrak{a}^*)$.
\bes
C_\psi(\mathfrak{a}^*)_W =\{f\in C_\psi(\mathfrak{a}^*) ~|~ f(w\cdot \la)= f(\la), \txt{ for all } w\in W, \la\in \mathfrak{a}^*\},
\ees
\bes
E_L(\mathfrak{a}^*)_W =\{f\in E_L(\mathfrak{a}^*) ~|~ f(w\cdot \la)= f(\la), \txt{ for all } w\in W, \la\in \mathfrak{a}^*\}, 
\ees
\begin{lem} \label{WinvariantEdense} 
For each positive number $L$, $E_L(\mathfrak{a}^*)_W$ is dense in $(C_\psi(\mathfrak{a}^*)_W, \|\cdot\|_\psi)$  if $\psi$ is as in Lemma \ref{Edense}.
\end{lem}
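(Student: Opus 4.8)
The plan is to deduce Lemma \ref{WinvariantEdense} from the non-invariant version, Lemma \ref{Edense}, by averaging over the Weyl group $W$. First I would observe that $W$ acts on $\mathfrak a^*$ by orthogonal transformations (preserving $\|\cdot\|_B$), so the averaging operator
\bes
P f(\la) = \frac{1}{|W|} \sum_{w\in W} f(w\cdot \la)
\ees
is well defined and has the following three properties: (i) $P$ maps $C_\psi(\mathfrak a^*)$ into $C_\psi(\mathfrak a^*)_W$, since $\psi(\|w\cdot\la\|_B)=\psi(\|\la\|_B)$ forces $\|Pf\|_\psi \leq \|f\|_\psi$; (ii) $P$ maps $E_L(\mathfrak a^*)$ into $E_L(\mathfrak a^*)_W$, because each $\la\mapsto f(w\cdot\la)$ is entire, bounded, and satisfies $|f(w\cdot\la)|\leq Ce^{L\|\Im(w\cdot\la)\|_B}=Ce^{L\|\Im\la\|_B}$ using that $W$ acts by the same orthogonal transformation on real and imaginary parts; and (iii) $P$ is the identity on $W$-invariant functions, i.e. $Pf=f$ for $f\in C_\psi(\mathfrak a^*)_W$.

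Granting these, the argument is short. Let $f\in C_\psi(\mathfrak a^*)_W$ and fix $\epsilon>0$. By Lemma \ref{Edense} (which applies since $\psi$ satisfies (\ref{koosislemma})), there is $h\in E_L(\mathfrak a^*)$ with $\|f-h\|_\psi<\epsilon$. Apply $P$: then $Ph\in E_L(\mathfrak a^*)_W$ by property (ii), and using property (iii) together with the contractivity in property (i),
\bes
\|f - Ph\|_\psi = \|Pf - Ph\|_\psi = \|P(f-h)\|_\psi \leq \|f-h\|_\psi < \epsilon.
\ees
Hence $E_L(\mathfrak a^*)_W$ is dense in $(C_\psi(\mathfrak a^*)_W,\|\cdot\|_\psi)$.

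The only point requiring a little care — and the nearest thing to an obstacle — is verifying property (ii), namely that precomposition with $w\in W$ preserves the exponential-type bound defining $E_L$. This hinges on the fact recorded in Section 3 that $W$ acts on $\mathfrak a^*$ as orthogonal transformations for $\langle\cdot,\cdot\rangle$ and that its (complex-linear) extension to $\mathfrak a_\C^*$ acts the same way on $\Re\la$ and $\Im\la$ separately, so that $\|\Im(w\cdot\la)\|_B=\|\Im\la\|_B$; boundedness on $\mathfrak a^*$ and entirety are clearly stable under this linear change of variable. Everything else is routine, so I would present this proof compactly, perhaps even folding it into a one-line remark that Lemma \ref{WinvariantEdense} follows from Lemma \ref{Edense} by averaging over $W$, since $W$ acts orthogonally and hence the averaging projection is a norm-one map fixing $W$-invariant functions and preserving both $C_\psi$ and $E_L$.
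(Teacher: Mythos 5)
Your proposal is correct and follows essentially the same route as the paper: both deduce the lemma from Lemma \ref{Edense} by applying the Weyl-group averaging operator and using that averaging is a norm-one map fixing $W$-invariant functions and preserving $E_L$. Your version merely spells out slightly more explicitly why averaging preserves the exponential-type bound (via orthogonality of the $W$-action on $\Im\la$), which the paper leaves implicit.
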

\begin{proof}
If $f\in C_\psi(\mathfrak{a}^*)_W$ then by Lemma \ref{Edense} there exists a sequence $\{f_n\}$ in $E_L(\mathfrak{a}^*)$ such that 
\bes 
\lim_{n \to \infty} \|f_n-f\|_\psi= 0.
\ees
We now consider the averaging operator
\bes
Tf_n(\la)= \frac{1}{|W|}\sum_{w\in W}f_n(w\cdot \la), \:\:\:\: \txt{ for all } \la\in \mathfrak{a}^*.
\ees
Clearly $Tf_n$ is $W$-invariant and bounded for each $n\in \N$. As each $f_n$ extends to an entire function of exponential type $L$ so does $Tf_n$.
This proves that $Tf_n\in E_L(\mathfrak{a}^*)_W$, for each $n\in \N$. The proof now follows by observing that 
\bes
\|Tf_n - f\|_\psi = \|T(f_n - f)\|_\psi\leq\|f_n- f\|_\psi.
\ees 
\end{proof}
The following theorem is an analogue of Lemma \ref{levlem} and constitutes the main step for the proof of Theorem \ref{symthm}.
\begin{thm}\label{phidense}
For any given positive number $L$ the space
$\Phi_L(\mathfrak{a}^*)$ is dense in $(E_L(\mathfrak{a}^*)_W, \|\cdot \|_\psi)$. If in addition 
\bes
\int_{1}^{\infty} \frac{\psi(r)}{r^2}dr =\infty,
\ees
then $\Phi_L(\mathfrak{a}^*)$ is dense in $(C_{\psi}(\mathfrak{a}^*)_W, \|\cdot \|_\psi)$.
\end{thm}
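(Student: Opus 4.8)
The plan is to prove the two density statements in sequence, the second following easily from the first once the machinery is in place. By Lemma \ref{WinvariantEdense}, the $W$-invariant exponential-type functions $E_L(\mathfrak a^*)_W$ are dense in $(C_\psi(\mathfrak a^*)_W,\|\cdot\|_\psi)$ when the divergence condition holds, so the whole theorem reduces to the first assertion: $\Phi_L(\mathfrak a^*)$ is dense in $(E_L(\mathfrak a^*)_W,\|\cdot\|_\psi)$. Here is where the Euclidean results of Section 2, the Abel transform, and the Radon transform must be combined.

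First I would fix $h\in E_L(\mathfrak a^*)_W$ and approximate it in $\|\cdot\|_\psi$ by "nice" functions. Since elements of $E_L(\mathfrak a^*)_W$ are $W$-invariant entire functions of exponential type $L$ but need not decay polynomially, they are \emph{not} in general spherical Fourier transforms of $C_c^\infty(K\backslash G/K)$ functions (Remark \ref{funcsp}, 3)); one first regularizes. Multiplying $h$ by $\widehat{h_t}(\la)=e^{-t(\|\la\|_B^2+\|\rho\|_B^2)}$ produces a function $h\cdot\widehat{h_t}$ which is still $W$-invariant and entire of exponential type $L$ (the Gaussian factor is entire of exponential type $0$), and now has rapid decay on $\mathfrak a^*$, hence lies in $\mathcal H^L(\mathfrak a^*_\C)_W$. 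By the Paley--Wiener theorem (Theorem \ref{PWsperical}), $h\cdot\widehat{h_t}=\widehat{\phi_t}$ for some $\phi_t\in C_c^\infty(K\backslash G/K)$ with $\mathrm{supp}\,\phi_t\subseteq\overline{\mathcal B(o,L)}$. Because $\widehat{h_t}\to 1$ locally uniformly and $|\widehat{h_t}|\le 1$, and because $h\in E_L\subseteq C_\psi$, one checks that $h\cdot\widehat{h_t}\to h$ in $\|\cdot\|_\psi$ as $t\to 0^+$: on a large ball $\|\la\|_B\le R$ this is uniform convergence, while for $\|\la\|_B>R$ one uses $|h\cdot\widehat{h_t}-h|\le 2|h(\la)|\le 2Ce^{L\|\Im\la\|_B}=2C$ on $\mathfrak a^*$ against the denominator $e^{\psi(\|\la\|_B)}\to\infty$. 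So it suffices to approximate each $\widehat\phi$ with $\phi\in C_c^\infty(K\backslash G/K)$, $\mathrm{supp}\,\phi\subseteq\overline{\mathcal B(o,L)}$, by elements of $\Phi_L(\mathfrak a^*)$.

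Next I would bring in the Abel and Radon transforms. By Theorem \ref{Abelthms}, $\mathcal A\phi\in C_c^\infty(\mathfrak a)_W$ and $\mathcal F(\mathcal A\phi)(\la)=\widehat\phi(\la)$. One must control $\mathrm{supp}\,\mathcal A\phi$: since $\widehat\phi\in\mathcal H^L(\mathfrak a^*_\C)_W$, its Euclidean Fourier preimage $\mathcal A\phi$ is supported in $\{Y\in\mathfrak a:\|Y\|_B\le L\}$ (this is the Euclidean Paley--Wiener theorem matching the symmetric-space one). Now I apply the Radon-transform results recalled at the end of Section 2: on $\mathfrak a\cong\R^d$, the $W$-invariant function $\mathcal A\phi$ restricted along each direction has a Radon transform, and the slice-projection identity (\ref{sliceproj}) gives $\widehat\phi(\la\omega)=\mathcal F(\mathcal A\phi)(\la\omega)=\mathcal F\big(\mathcal R(\mathcal A\phi)(\omega,\cdot)\big)(\la)$. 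The point of this reduction is that $\mathcal R(\mathcal A\phi)(\omega,\cdot)$ is a compactly supported function of one real variable, and a one-variable compactly supported function's Fourier transform is approximated — in the relevant weighted sup-norm — by exponential sums, via Lemma \ref{Euclideanlemma} with $g(s,\la)=\phi_\la(\cdot)$-type kernels. Concretely, I would write $\widehat\phi(\la)=\int_{\mathcal B(o,L)}\phi(x)\phi_{-\la}(x)\,dx$ directly from (\ref{hlsphreln})--(\ref{philambda}), view this as an instance of $F(\la)=\int_{B(0,L)}f(x)g(x,\la)\,d\mu(x)$ with $g(x,\la)=\phi_\la(x)$ (bounded by $1$ via (\ref{phi0}), smooth in $x$, with derivative bounds on compacta by Theorem \ref{philambdathm}, 5)), and invoke Lemma \ref{Euclideanlemma} to get $\big|\widehat\phi(\la)-\sum_j C_{v_j}\phi_{-\la}(v_j)\big|<\epsilon$ for $\la\in B(0,\tau)$, with $v_j\in\mathcal B(o,L)$, and a uniform bound $\big|\sum_j C_{v_j}\phi_{-\la}(v_j)\big|\le\|\phi\|_{L^1}$ on all of $\mathfrak a^*$. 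The sum $\sum_j C_{v_j}\phi_{-\la}(v_j)$ lies in $\Phi_L(\mathfrak a^*)$ (using $\phi_{-\la}(v_j)=\phi_\la(v_j)$ after $W$-averaging, or noting $-v_j$-type representatives).

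The main obstacle — and the step requiring the most care — is upgrading the \emph{compact-set} approximation from Lemma \ref{Euclideanlemma} to a \emph{global} approximation in the weighted norm $\|\cdot\|_\psi$. Lemma \ref{Euclideanlemma} gives smallness only on $B(0,\tau)$; outside, we have only the a priori bound $\big|\widehat\phi(\la)-\sum_j C_{v_j}\phi_{-\la}(v_j)\big|\le|\widehat\phi(\la)|+\|\phi\|_{L^1}$. Since $\widehat\phi\in E_L(\mathfrak a^*)$ is \emph{bounded} on $\mathfrak a^*$ (no decay) and the $\Phi_L$-element is likewise bounded by $\|\phi\|_{L^1}$, the difference is bounded by a constant on $\mathfrak a^*$, while the weight $e^{\psi(\|\la\|_B)}\to\infty$. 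Thus: given $\epsilon$, first pick $\tau=\tau(\epsilon)$ so large that $e^{-\psi(\|\la\|_B)}\cdot\big(\text{const}\big)<\epsilon$ for $\|\la\|_B\ge\tau$ — possible precisely because $\psi(r)\to\infty$ — and then apply Lemma \ref{Euclideanlemma} on $B(0,\tau)$ to handle $\|\la\|_B<\tau$ with error $<\epsilon$ there. This two-zone estimate, combined with $\|\widehat\phi\|_{L^\infty}$ bounds from Theorem \ref{philambdathm}, 3) and the uniform bound from Lemma \ref{Euclideanlemma}, yields $\big\|\widehat\phi-\sum_j C_{v_j}\phi_{-\cdot}(v_j)\big\|_\psi<2\epsilon$. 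Chaining the three approximations — $E_L(\mathfrak a^*)_W\to\{\widehat\phi:\phi\in C_c^\infty(K\backslash G/K)\}$ via the heat kernel, then the identification via Paley--Wiener/Abel, then $\widehat\phi\to\Phi_L(\mathfrak a^*)$ via Lemma \ref{Euclideanlemma} — proves density of $\Phi_L(\mathfrak a^*)$ in $(E_L(\mathfrak a^*)_W,\|\cdot\|_\psi)$; composing with Lemma \ref{WinvariantEdense} gives density in $(C_\psi(\mathfrak a^*)_W,\|\cdot\|_\psi)$ under (\ref{koosislemma}).
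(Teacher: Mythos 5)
Your overall strategy is correct in outline — reduce to density of $\Phi_L(\mathfrak a^*)$ in $(E_L(\mathfrak a^*)_W,\|\cdot\|_\psi)$ via Lemma \ref{WinvariantEdense}, regularize a given $f\in E_L(\mathfrak a^*)_W$ to land in the Paley--Wiener space $\mathcal H^L(\mathfrak a^*_\C)_W$, and then apply Lemma \ref{Euclideanlemma} to the compactly supported preimage with a two-zone estimate. Your final step (writing $\widehat\phi(\la)=\int_{\mathcal B(o,L)}\phi(x)\phi_{-\la}(x)\,dx$, invoking Lemma \ref{Euclideanlemma} with $g(Y,\la)=\phi_{-\la}(\exp Y)$ and the measure $J(\exp Y)\,dY$, and patching together the bound on $B(0,\tau)$ with the trivial bound outside) is exactly what the paper does, and the Abel/Radon digression is not needed once you have that integral representation.

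The genuine gap is the regularization step. You multiply $f$ by $\widehat{h_t}(\la)=e^{-t(\|\la\|_B^2+\|\rho\|_B^2)}$ and claim the Gaussian is ``entire of exponential type $0$.'' It is not. The holomorphic extension of $e^{-t\|\la\|_B^2}$ to $\mathfrak a^*_\C$ is $e^{-t\sum\la_j^2}$, and on the imaginary axis this has modulus $e^{t\|\Im\la\|_B^2}$, which is of order $2$, not of any finite exponential type. Consequently $f\cdot\widehat{h_t}$ does not belong to $\mathcal H^L(\mathfrak a^*_\C)_W$ (nor to any $\mathcal H^{L'}$), and Theorem \ref{PWsperical} does not apply, so you cannot produce the compactly supported $\phi_t$ on which the rest of your argument rests. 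Moreover, even if you replace $\widehat{h_t}$ by a legitimate Paley--Wiener multiplier $\widehat\phi(h\cdot)$ of exponential type $h$, the product $f\cdot\widehat\phi(h\cdot)$ has exponential type $L+h>L$, so its Paley--Wiener preimage is supported in $\overline{\mathcal B(o,L+h)}$ and Lemma \ref{Euclideanlemma} only gives approximation by $\Phi_{L+h}(\mathfrak a^*)$, not $\Phi_L(\mathfrak a^*)$ — and the latter is exactly what is needed in the application (Step 2 of the proof of Theorem \ref{symthm}, where $f$ vanishes on $\mathcal B(o,L)$ precisely). The paper solves both problems at once by first dilating: one approximates $f$ by $f_\nu(\la)=f(\nu\la)$ for $\nu<1$ close to $1$ (this stays close in $\|\cdot\|_\psi$ because $f$ is bounded and uniformly continuous on compacta and $\psi\to\infty$), which drops the exponential type to $\nu L$, and then multiplies by $\widehat\phi(h\cdot)$ with $h<L(1-\nu)$, giving a product of exponential type $\nu L+h<L$ with polynomial decay, hence in $\mathcal H^L(\mathfrak a^*_\C)_W$. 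This dilation step is the missing idea; without it, the argument cannot be made to produce an approximant in $\Phi_L(\mathfrak a^*)$.
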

We first sketch the main idea behind the proof. It suffices to prove the first part of the theorem and then apply Lemma \ref{WinvariantEdense} to obtain the second part. The main idea of the proof is to first approximate in $\|\cdot \|_\psi$ a given $f\in E_L(\mathfrak{a}^*)_W$ by a function $\beta = \widehat F$, for some $F\in C_c^\infty(K\backslash G/K)$ with $\txt{ supp } F \subseteq {\mathcal B}(o, L)$. This function $\beta$ can then be approximated (in $\|\cdot\|_\psi$) by elements of $\Phi_L(\mathfrak{a}^*)$ using Lemma \ref{Euclideanlemma}. Now, given any $f \in E_L(\mathfrak{a}^*)_W$ one can think of a function of the form $f\cdot \widehat{\phi}=\beta$, where $\phi\in C_c^\infty(K\backslash G/K)$. The Paley-Wiener theorem then implies that $\beta=f\cdot\widehat{\phi}$ is the spherical Fourier transform of a function in $C_c^\infty(K\backslash G/K)$. However, there are two immediate problems: the function $f \cdot \widehat \phi$ may not belong to $E_L(\mathfrak{a}^*)_W$ and may not be close to $f$ in $\|\cdot\|_{\psi}$. In the following, it will be shown that both these problems can be tackled by suitably dilating $f$ and $\widehat \phi$ on $\mathfrak{a}^*_\C$.
\begin{proof}
[Proof of Theorem \ref{phidense}.]
Let $f\in E_L(\mathfrak{a}^*)_W$ and let $\epsilon$ be a given positive number. We claim that there exists $\nu \in (0, 1)$ such that 
\be \label{ffnureln}
\sup_{\la\in \mathfrak{a}^*}\frac{|f(\la)-f_\nu(\la)|}{e^{\psi(\|\la\|_B)}} < \epsilon,
\ee
where $f_\nu(\la)= f(\nu \la)$. 
This follows due to the facts that $f$ is bounded,  uniformly continuous on compact subsets of $\mathfrak a^*$ and $\psi$ increases to infinity.
Let us fix $\nu\in (0, 1)$ so that (\ref{ffnureln}) holds. Suppose $\phi\in C_c^\infty(K\backslash G/K)$ with $\txt{supp }\phi \subseteq {\mathcal B}(o, 1)$ and $\widehat \phi(0)=1$. We claim that there exists a positive real number $h$ such that 
\be \label{fnuphihatreln}
\sup_{\la\in \mathfrak{a}^*}\frac{|f_\nu (\la)-f_\nu(\la)\widehat \phi(h\la)|}{e^{\psi(\|\la\|_B)}} < \epsilon.
\ee
As before, using the boundedness of $f$ on $\mathfrak{a}^*$ and $\lim_{r \to \infty}\psi(r)= \infty$, we can choose $M\in (0, \infty)$ such that 
\bes
\frac{|f_\nu(\la)|}{e^{\psi(\|\la\|_B)}} < \frac{\epsilon}{1+\|\widehat \phi\|_{L^\infty(\mathfrak{a}^*)}}, \:\:\:\: \txt{ for all } \|\la \|_B\geq M.
\ees
Hence, for all $h\in (0, \infty)$,
\bes
\frac{|f_\nu(\la)-f_\nu(\la)\widehat \phi(h\la)|}{e^{\psi(\|\la \|_B)}}\leq \frac{|f_\nu(\la)|}{e^{\psi(\|\la \|_B)}}(1+\|\widehat \phi\|_{L^\infty(\mathfrak{a}^*)})< \epsilon, \:\:\:\: \txt{ for all } \|\la \|_B\geq M.
\ees
As $\widehat \phi$ is continuous at $\la=0$,  there exists $\delta$ positive such that, if $\|\la \|_B<\delta$, then
\bes
|1-\widehat{\phi}(\la)| < \frac{\epsilon}{\|f\|_{L^\infty(\mathfrak{a}^*)}}.
\ees
If we choose $h < \txt{min }\{\delta/M, L(1-\nu)\}$, then
\bes
\frac{|f_\nu(\la)-f_\nu(\la)\widehat \phi(h\la)|}{e^{\psi(\|\la \|_B)}}\leq \|f\|_{L^\infty(\mathfrak{a}^*)}|1-\widehat\phi(h\la)| < \epsilon, \:\:\: \txt{ for all } \|\la \|_B < M.
\ees
This proves the claim. Note that in the inequality above we have only used the assumption $h<\delta/M$. The second condition on $h$ will be used in the following step. We fix such an $h$ and define 
\bes
g_1(\la) = f_\nu (\la) ~ \widehat \phi(h\la), \:\: \la\in \mathfrak{a}^*_\C.
\ees
Rewriting (\ref{fnuphihatreln}) we have
\be \label{fnug1reln}
\|f_\nu - g_1\|_{\psi} < \epsilon.
\ee
We observe that $f_\nu, g_1$ are $W$-invariant and for all $\la \in \mathfrak{a}^*_\C$
\beas
|f_\nu(\la)| &\leq& Ce^{L\|\nu\Im\la\|_B} < Ce^{L\|\Im \la\|_B}, \\ 
|g_1(\la)| &=& |f_\nu(\la) \widehat\phi(h\la)|\leq Ce^{(\nu L + h)\|\Im \la\|_B}\leq Ce^{L\|\Im \la\|_B},
\eeas
as $h$ is smaller than $L(1-\nu)$.
Hence, $f_\nu$ and $g_1$ both are elements of $E_L(\mathfrak{a}^*)_W$. Since $\phi \in C_c^\infty(K\backslash G/K)$, Theorem \ref{PWsperical} implies that for all $N\in \N$,
\bes
|g_1(\la)|= |f_\nu(\la) \widehat \phi(h\la)|\leq  C_{h,N} \frac{ e^{L\|\Im \la\|_B}}{(1 + \|\la\|_B)^N}, \:\:\:\: \txt{ for all } \la \in \mathfrak{a}^*_\C.
\ees 
By another application of Theorem \ref{PWsperical} we have $g_1= \widehat F$, for some $F\in C_c^\infty(K\backslash G/K)$ with $\txt{supp } F \subset {\mathcal B}(o, L)$. Hence,
\beas
g_1(\la) &=& \int_{\mathcal B(o, L)}F(x)\phi_{-\la}(x)~dx\\
&=& \int_{\{Y\in \mathfrak a_+~\big|~ \|Y\|_B \leq L \}}F\big(\exp Y \big) \phi_{-\la}\big(\exp Y\big)~ J(\exp Y)~dY,
\eeas
the integrand being determined by its restriction on $\mathfrak a_+$.
We now wish to apply Lemma \ref{Euclideanlemma} to the function $g_1$ with $g(Y, \la)= \phi_{-\la}(\exp Y)$  and $d\mu (Y)=J(\exp Y)dY$, using identification of $\mathfrak a$ and $\mathfrak a^*$ with $\R^d$. Let $\{E_j\}_{j=1}^d$ be an orthonormal basis of $\mathfrak{a}$ with respect to $B|_{\mathfrak{a}\times \mathfrak{a}}$, the restriction of the Cartan-Killing form $B$ on $\mathfrak{a} \times \mathfrak{a}$. Then every $Y\in \mathfrak{a}$ can be written uniquely as 
\bes
Y = \sum_{j=1}^d Y_j E_j, \:\:\:\: Y_j \in \R. 
\ees
Viewing $E_j$ as left $G$-invariant differential operator we have 
\beas
(E_j \phi_{-\la})(\exp Y) &=& \frac{d}{dt}\bigg|_{t=0} \phi_{- \la} (\exp Y \cdot \exp{tE_j})\\
&=& \frac{d}{dt}\bigg|_{t=0} \phi_{- \la} \left(\exp (Y +tE_j) \right)\\
&=& \big(\frac{\partial}{\partial Y_j}\phi_{-\la}\big) (\exp Y).
\eeas
It now follows from Theorem \ref{philambdathm}, 3) and 5) that for all $\la$ in a compact subset $K_1$ of $\mathfrak a^*$  
\bes
\big |\big(\frac{\partial}{\partial Y_j}\phi_{-\la}\big) (\exp Y)\big|\leq  C_j(1+\|\la\|_B)~ \phi_0(\exp Y)\leq
M_{K_1},  
\ees 
for all $j\in \{1, \cdots, d\}$.
Therefore, we can apply Lemma \ref{Euclideanlemma}. In this regard, we first choose a positive number $\tau$ such that 
\bes
e^{\psi(\|\la\|_B)} > \frac{\|g_1\|_{L^{\infty}(\mathfrak{a}^*)}+ \|F\|_{L^1(G)}}{\epsilon}, \:\:\:\: \txt{ for all } \|\la\|_B \geq \tau.
\ees
By Lemma \ref{Euclideanlemma} we get  a finite set $\{ x_1, \cdots, x_N \}\subset \{\exp Y\mid Y\in \mathfrak a, \|Y\|_B<L\}$ and $C_{x_j}\in \C$, for $j= 1, \cdots, N$ such that 
\bes
| g_1(\la)- \sum_{j=1}^N C_{x_j}\phi_{-\la}(x_j) | < \epsilon, \:\:\:\: \txt{ for all } \|\la\|_B \leq \tau.
\ees
If we define
\bes
g_N(\la) = \sum_{j=1}^N C_{x_j}\phi_{-\la}(x_j),
\ees
then we have 
\bes
\|g_N\|_{L^\infty(\mathfrak{a}^*)} \leq \|F\|_{L^1(G)}.
\ees
Therefore, 
\bea \label{g1gNreln}
\|g_1- g_N\|_\psi &\leq &  \sup_{\|\la\|_B \leq \tau}\frac{|g_1(\la)-g_N(\la)|}{e^{\psi(\|\la\|_B)}} + \sup_{\|\la\|_B> \tau}\frac{|g_1(\la)-g_N(\la)|}{e^{\psi(\|\la\|_B)}} \nonumber\\
& < & \epsilon + \big(\|F\|_{L^1(G)}+\|g_1\|_{L^{\infty}(\mathfrak{a}^*)} \big)\frac{\epsilon}{\big(\|F\|_{L^1(G)}+\|g_1\|_{L^{\infty}(\mathfrak{a}^*)}\big)}\nonumber\\
&=& 2 \epsilon.
\eea
Clearly $g_N \in \Phi_L(\mathfrak{a}^*)$ and by (\ref{ffnureln}), (\ref{fnug1reln}) and (\ref{g1gNreln}) 
\bes
\|f- g_N\|_\psi \leq \|f- f_\nu\|_\psi + \|f_\nu- g_1\|_\psi + \|g_1- g_N\|_\psi <  4 \epsilon.
\ees
This completes the proof.
\end{proof}

For $f\in L^1(X)$ we define the $K$-biinvariant component $\mathcal Sf$ of $f$ by the integral
\bes
\mathcal Sf(x) = \int_Kf(kx)~dk, \:\: x\in X,
\ees
and for $g \in G$ we define the left translation operator $l_g$ on $L^1(X)$ by 
\bes
l_g f(x)= f(gx), \:\:  x\in X.
\ees
\begin{rem}
Usually one defines the operator $l_g$ as left translation by $g^{-1}$. We have preferred $l_g$ as left translation by $g\in G$ because then it follows that ${\mathcal S}(l_gf)={\mathcal S}(l_{g_1}f)$ if $gK=g_1K$.
\end{rem}
It is known that (\cite{H1}, Chapter III, \S 2, P. 209) the Fourier transforms of $f$ and $l_g f$ are related by the formula
\be\label{hfttranslatereln}
(l_{g}f{\widetilde{)}}(\la, k) = e^{(i\la - \rho) \left(H(gk)\right)} \widetilde f(\la, \kappa(g k)).
\ee
For a nonzero integrable function $f$, its $K$-biinvariant component $\mathcal S(f)$ may not be nonzero. However, the following lemma shows that there always exists $g\in G$ such that $\mathcal S(l_gf)$ is nonzero. 
\begin{lem} \label{nonzeroradiallem}
If $f\in L^1(X)$ is nonzero then for every $r$ positive there exists $g\in G$ with $gK\in {\mathcal B}(o, r)$ such that $\mathcal S(l_gf)$ is nonzero.
\end{lem}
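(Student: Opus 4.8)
The plan is to prove the contrapositive of the key implication: if $\mathcal S(l_g f) = 0$ for \emph{every} $g$ with $gK \in \mathcal B(o,r)$, then $f = 0$. The main tool is the Fourier transform relation (\ref{hfttranslatereln}) together with the fact (noted just before the lemma) that $\mathcal S$ is a well-defined operation on left translates in the sense that it depends only on $gK$. First I would compute the Fourier transform of $\mathcal S(l_g f)$. Using (\ref{hfttranslatereln}) and averaging over $K$, one gets, for $\la \in \mathfrak a^*$,
\bes
(\mathcal S(l_g f))\widetilde{\,}(\la, k) = \int_K e^{(i\la - \rho)(H(gk'))}\, \widetilde f(\la, \kappa(gk'))\, dk',
\ees
which by the $K$-biinvariance of $\mathcal S(l_g f)$ is independent of $k$; more precisely it equals the spherical-type average that, upon using the integral formula (\ref{Kintformula}) or the structure of $\phi_\la$, can be written in terms of $\widetilde f(\la, \cdot)$ integrated against a kernel depending on $g$.

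The key step is then to show that the vanishing of $\mathcal S(l_g f)$ for all $gK$ in a nonempty open ball forces $\widetilde f(\la, k) = 0$ for a.e.\ $(\la, k)$, hence $f = 0$ by Fourier uniqueness. To do this I would first reduce, via Remark \ref{ranalyticity}, to $f$ replaced by $f * h_t$ for a fixed small $t > 0$: since $\mathcal S$ commutes with right convolution by the $K$-biinvariant kernel $h_t$ (that is, $\mathcal S(l_g f) * h_t = \mathcal S(l_g(f * h_t))$, using that $h_t$ is $K$-biinvariant so right convolution commutes with left translation and with the $K$-average), the hypothesis $\mathcal S(l_g f) = 0$ on $\mathcal B(o,r)$ propagates to $\mathcal S(l_g(f*h_t)) = 0$ on $\mathcal B(o,r)$. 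But $f * h_t$ is real analytic on $X$; hence $\mathcal S(l_g(f*h_t))$, being an average of real analytic functions, is real analytic on $X$, and if it vanishes on the nonempty open ball $\mathcal B(o,r)$ it vanishes identically on $X$. So for every $g \in G$, $\mathcal S(l_g(f*h_t)) \equiv 0$ on $X$; evaluating at the base point $o$ gives
\bes
0 = \mathcal S(l_g(f*h_t))(o) = \int_K (f*h_t)(gkK)\, dk.
\ees
Since $g$ ranges over all of $G$ and $gkK$ with $k \in K$ ranges over the full $K$-orbit of $gK$, one obtains that the $K$-orbital average of $f * h_t$ vanishes at every point of $X$. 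Feeding this back: applying the same argument with $f$ replaced by $l_{g_0} f$ for arbitrary $g_0$ shows every orbital integral of $f * h_t$ along every $K$-orbit (through every point) is zero, which by a standard orbital-integral injectivity argument (or directly, by integrating against spherical functions and using the Plancherel/inversion formula (\ref{hft})) forces $f * h_t = 0$. Then $f = 0$ since $\widehat{h_t}$ never vanishes, contradicting nontriviality of $f$.

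The main obstacle I expect is the bookkeeping in the step "$\mathcal S(l_g(f*h_t)) \equiv 0$ for all $g$ implies $f*h_t = 0$": one must be careful that the vanishing is genuinely for \emph{all} $g \in G$ (not just one translate), and that real analyticity is applied to the correct function on $X$ rather than on $G$. The cleanest route is to fix $g_0 \in G$ arbitrary, apply the hypothesis to the function $l_{g_0} f$ (whose restriction near $o$ still satisfies the translate-vanishing hypothesis on a possibly smaller ball, or one simply re-runs the whole argument), conclude $\mathcal S(l_g l_{g_0} f * h_t) \equiv 0$, and then take $g = e$ to get $\int_K (l_{g_0} f * h_t)(kK)\, dk = 0$; as $g_0$ varies this says the spherical average of $f*h_t$ about $o$, translated everywhere, vanishes — equivalently $\mathcal S(l_{g_0}(f*h_t)) (o) = 0$ for all $g_0$, which is exactly the family of orbital integrals, and the spherical Fourier inversion then yields $f*h_t=0$. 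I would present this last implication using the spherical transform: $\mathcal S(l_{g_0}(f*h_t))$ is a $K$-biinvariant $L^1$ function whose spherical transform is computed via (\ref{hlsphreln}) and shown to be $\widehat{h_t}(\la)$ times a quantity built from $\widetilde f$, and vanishing of all these forces $\widetilde f \equiv 0$.
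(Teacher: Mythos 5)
Your proposal has the right ingredients --- smoothing by the heat kernel $h_t$, the key identity $\mathcal S(l_g(f*h_t))(o)=(f*h_t)(gK)$, and real analyticity of $f*h_t$ --- and in the end lands on essentially the paper's argument, but the middle of your reasoning applies real analyticity to the wrong object and the closing orbital-integral passage is both superfluous and incorrect.

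Here is the confusion. Once you know $\mathcal S(l_gf)\equiv 0$ for every $g$ with $gK\in\mathcal B(o,r)$, and that $\mathcal S$ commutes with right convolution by the $K$-biinvariant $h_t$, you already have $\mathcal S(l_g(f*h_t))\equiv 0$ \emph{identically on all of $X$} for each such $g$. Invoking real analyticity of the function $\mathcal S(l_g(f*h_t))$ on $X$ is then vacuous --- that function is already known to vanish everywhere --- and it in no way licenses the leap ``so for every $g\in G$, $\mathcal S(l_g(f*h_t))\equiv 0$.'' What you actually want is to view the assignment $gK\mapsto \mathcal S(l_g(f*h_t))(o)=(f*h_t)(gK)$ as a function of the \emph{translation parameter}; but this is just $f*h_t$ itself, which is real analytic on $X$ by Remark \ref{ranalyticity} and, by the preceding identity, vanishes on the nonempty open ball $\mathcal B(o,r)$. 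Real analyticity then forces $f*h_t\equiv 0$, whence $\widetilde f\,\widehat{h_t}=0$ and $f=0$. This is precisely what the paper does: it computes $\int_G\mathcal S(l_gf)(x)\,h_t(x^{-1})\,dx=(f*h_t)(gK)$ directly, notes this vanishes for $gK\in\mathcal B(o,r)$, and applies Remark \ref{ranalyticity} to $f*h_t$.

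The closing discussion should be dropped. The quantity $\int_K (f*h_t)(gkK)\,dk$ is not a $K$-orbital integral: since $kK=K$ for $k\in K$, the set $\{gkK : k\in K\}$ is the singleton $\{gK\}$, so the ``integral'' trivially equals $(f*h_t)(gK)$; the $K$-orbit of $gK$ under the relevant (left) action is $\{kgK : k\in K\}$, not $\{gkK:k\in K\}$. Also, you cannot run the argument with $l_{g_0}f$ for arbitrary $g_0$, since the vanishing hypothesis is available only for translates by $g$ with $gK\in\mathcal B(o,r)$. In any case, once real analyticity is applied to $f*h_t$ itself, you already have $f*h_t\equiv 0$ and none of the orbital-integral or spherical-transform injectivity machinery is needed.
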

\begin{proof}
Suppose the result is false. Then there exists a positive number $r$ such that for all $gK\in {\mathcal B}(o, r)$ the function  $\mathcal S(l_gf)$ is zero. Hence, for all $t$ positive we have
\bes
\int_G \mathcal S(l_gf)(x) ~ h_t(x^{-1}) ~ dx =0.
\ees
This implies that $(f*h_t)(gK)$ is zero for all positive number $t$. In fact, 
\beas
\int_G \mathcal S(l_gf)(x)~ h_t(x^{-1})~dx
&=& \int_G \left(\int_K l_gf(kx)~dk\right)~h_t(x^{-1})~ dx\\
&= & \int_G l_gf(x)~h_t(x^{-1})~dx \\
&& \txt{(using change of variable } kx \mapsto x) \\
&=& \int_G f(gx)~ h_t(x^{-1})~ dx\\
&=& f*h_t(gK).
\eeas
It follows that $f*h_t$ vanishes on the open ball ${\mathcal B}(o, r)$, for all $t$ positive. Remark \ref{ranalyticity} now implies that $f$ is the zero function which contradicts our assumption. 
\end{proof}
We are now in a position to prove our main result.
\begin{proof}
[Proof of Theorem \ref{symthm}.]
We first prove part (a). The following steps will lead to the proof.\\

\noindent
{\em Step $1$.} 
We first observe that it suffices to work under the assumption that $f$ is continuous. To see this we assume that $f$ vanishes on an open ball ${\mathcal B}(g_0K, L)$ for some positive number $L$ and satisfies (\ref{symest}). Let $\phi \in C_c^\infty(K\backslash G /K)$ with $\txt{supp } \phi \subseteq {\mathcal B}(o, L/2)$. Then $f*\phi \in C(G/K) \cap L^1(G/K)$ and 
\beas
&& \int_{\mathfrak a^* \times K}|(f*\phi\widetilde{)}(\la, k)| ~ e^{\psi(\|\la\|_B)} ~ |{\bf c}(\la)|^{-2} ~ d\la \\
&=& \int_{\mathfrak a^* \times K}|\widetilde f(\la, k)| ~ |\widehat \phi(\la)| ~ e^{\psi(\|\la\|_B)} ~ |{\bf c}(\la)|^{-2} ~ d\la < \infty.
\eeas
Moreover, $f*\phi$ vanishes on ${\mathcal B}(g_0K, L/2)$. In fact, if $g_1K\in {\mathcal B}(g_0K, L/2)$ then for all $gK\in {\mathcal B}(o, L/2)$ it follows by the $G$-invariance of the Riemannian metric $\mathsf d$ that 
\beas
{\mathsf d}(g_0K, g_1gK) &\leq& {\mathsf d}(g_0K, g_1K) + {\mathsf d}(g_1K, g_1gK) \\
&<& \frac{L}{2} + {\mathsf d}(o, gK) < L,
\eeas
that is, $g_1gK\in {\mathcal B}(g_0K,L)$. This implies that $f(g_1g)$ is zero for all $gK\in {\mathcal B}(o, L/2)$ and hence
\beas
(f*\phi)(g_1) &=& \int_G f(g_1g) ~ \phi(g^{-1})~ dg \\
&=& \int_{\txt{supp }\phi} f(g_1g) ~ \phi(g^{-1})~ dg =0.
\eeas
To prove that $f$ is zero, it suffices to show that $f\ast\phi$ is zero.
Indeed, if $f*\phi$ vanishes identically then so does $\widetilde f \cdot \widehat \phi$. But since $\widehat \phi$ is nonzero almost everywhere (as $\phi \in C_c^\infty(K \backslash G/K)$) it would follow that $\widetilde f$ vanishes almost everywhere on $\mathfrak a^* \times K$ implying that $f$ is zero. This completes step $1$.\\ 

\noindent
{\em Step $2$.} In this step, we prove part (a) under the additional assumption that $f\in L^1(X)\cap C(X)$ is $K$-biinvariant and vanishes on the open set ${\mathcal B}(o, L)$, for some positive number $L$. The spherical Fourier transform of $f$ then satisfies the condition
\be \label{bikest}
\int_{\mathfrak a_+^*}|\widehat f(\la)|~ e^{\psi(\|\la\|_B)}~|{\bf c}(\la)|^{-2}d\la < \infty,
\ee
and the integral $I$ is infinite. By (\ref{bikest}) it follows that $\widehat f \in L^1(\mathfrak a_+^*, |{\bf c}(\la)|^{-2} d\la)$ and hence by the Fourier inversion (\ref{hft}) restricted to $K$-biinvariant functions
\bes
f(x)= \int_{\mathfrak a_+^*} \widehat f(\la)~ \phi_\la(x)~|{\bf c}(\la)|^{-2}~d\la = 0,
\ees 
for all $x\in {\mathcal B}(o, L)$. This implies that for all $u\in \Phi_L(\mathfrak{a}^*)$
\be \label{uvanishing}
\int_{\mathfrak a_+^*} \widehat f(\la) ~ u(\la)
~ |{\bf c}(\la)|^{-2}~d\la = 0.
\ee
Since $\widehat f \in L^1(\mathfrak a^\ast, ~|{\bf c}(\la)|^{-2} d\la)$ is also a bounded function, it follows that that $\widehat f \in L^2(\mathfrak a^\ast, ~|{\bf c}(\la)|^{-2} d\la)$. To show that $f$ vanishes identically it suffices for us to show that $\|\widehat f\|_{L^2(\mathfrak a^\ast, ~|{\bf c}(\la)|^{-2} d\la)}$ is zero. Using the fact that $I$ is infinite we have
\bes
\int_{1}^{\infty} \frac{\psi(r)}{r^2}  dr = C\int_{\{\la\in \mathfrak a_+^*\mid\:\|\la\|_B\geq 1\}}\frac{\psi(\|\la\|_B)}{\|\la\|_B^{d+1}}d\la=\infty.
\ees
Since $\overline{\widehat f}$ is a bounded continuous function on $\mathfrak a^\ast$, it follows that $\overline{\widehat f}\in C_{\psi}( \mathfrak a^*)$. Therefore, by Theorem \ref{phidense}, we can approximate $\overline{\widehat f}$ by elements of $\Phi_L(\mathfrak{a}^*)$, that is, given any $\epsilon$ positive there exists $u_1\in \Phi_L(\mathfrak{a}^*)$ such that
\bes
\|\overline{\widehat f} - u_1 \|_\psi < \epsilon.
\ees
We now get that
\beas
\int_{\mathfrak{a}^*} |\widehat f(\la)|^2~ |{\bf c}(\la)|^{-2}~d\la 
&=& \left|\int_{\mathfrak{a}^*} \left(\overline{\widehat f(\la)}-u_1(\la)+u_1(\la)\right)~ \widehat f(\la)~|{\bf c}(\la)|^{-2}~d\la \right|\\
&\leq& \int_{\mathfrak{a}^*} \frac{\left|\overline{\widehat f}(\la)- u_1(\la)\right|}{e^{\psi(\|\la\|_B)}}~|\widehat f(\la)|~e^{\psi(\|\la\|_B)}~|{\bf c}(\la)|^{-2}~d\la \\ &+& \left| \int_{\mathfrak a^*}\widehat f(\la)u_1(\la) |{\bf c}(\la)|^{-2}~d\la \right|\\
&< & \epsilon \int_{\mathfrak{a}^*} ~|\widehat f(\la)|e^{\psi(\|\la\|_B)}|{\bf c}(\la)|^{-2}d\la, 
\eeas
by (\ref{uvanishing}) as the second integral in the right hand side is zero. The last integral is finite by our assumption  (\ref{bikest}).  As $\epsilon $ is arbitrary, it follows that $\|\widehat f\|_{L^2(\mathfrak a^\ast, ~|{\bf c}(\la)|^{-2} d\la)}=0$ and hence $f$ is the zero function.\\

\noindent
{\em Step $3$.} We shall now reduce the general case to the case of $K$-biinvariant functions by using the radialization operator $\mathcal S$. Let $f \in L^1(X)$ be a nonzero function which vanishes on a nonempty open subset $U$ of $X$ and satisfies the estimate (\ref{symest}). 
We now choose $gK\in U$ and consider the function  $l_{g}f$. The function $l_{g}f$ then vanishes on the open set $g^{-1}U$ which  contains the identity coset $eK$. Hence, there exists a positive number $L$ such that $l_{g}f$ vanishes on the ball ${\mathcal B}(o, L)$. Using the fact that $k\mapsto  H(gk)$ is a continuous function on the compact set $K$  it follows from Remark \ref{hftproperitesrem}, the integration formula (\ref{Kintformula}) and (\ref{hfttranslatereln}) that
\beas
&& \int_{\mathfrak{a}^* \times K} \left|\big(l_{g}f\widetilde{\big)}(\la, k) \right| ~ e^{\psi(\|\la\|_B)} ~ |{\bf c}(\la)|^{-2} ~ d\la ~ dk \\
&=& \int_{\mathfrak{a}^* \times K} \left|e^{(i\la - \rho)\left(H(gK) \right)} ~ \widetilde f(\la, \kappa(gk)) \right| ~ e^{\psi(\|\la\|_B)} ~ |{\bf c}(\la)|^{-2} ~ d\la ~ dk\\
&=& \int_{\mathfrak a^* \times K} e^{- \rho \left(H(gk)\right)} \left|~ \widetilde f(\la, \kappa(gk)) \right| ~ e^{\psi(\|\la\|_B)} ~ |{\bf c}(\la)|^{-2} ~ d\la ~ dk \\
&\leq & C_g \int_{\mathfrak a^* \times K} ~ \left| \widetilde f(\la, \kappa(gk)) \right| ~ e^{\psi(\|\la\|_B)} ~ |{\bf c}(\la)|^{-2} ~ d\la ~ dk \\
&=& C_g \int_{\mathfrak a^* \times K} ~ \left|\widetilde f(\la, k) \right| ~ e^{-2\rho\left(H(g^{-1}k) \right)} ~ e^{\psi(\|\la\|_B)} ~ |{\bf c}(\la)|^{-2} ~ d\la ~ dk \\
&\leq & C'_g \int_{\mathfrak a^* \times K} ~ \left|\widetilde f(\la, k) \right| ~ e^{\psi(\|\la\|_B)} ~ |{\bf c}(\la)|^{-2} ~ d\la ~ dk < \infty.
\eeas
Hence, the function $(l_{g}f \widetilde{)}$ satisfies the estimate (\ref{symest}). Therefore, it is enough for us to assume that $f$ vanishes on an open ball of the form ${\mathcal B}(o, L)$, for some positive number $L$. An application of Lemma \ref{nonzeroradiallem} for $r= L/2$ shows that there exists $g_0K\in {\mathcal B}(o, L/2)$ such that $\mathcal S(l_{g_0}f)$ is nonzero. We now claim that $\mathcal S(l_{g_0}f)$ vanishes on ${\mathcal B}(o, L/2)$. Since $f$ vanishes on ${\mathcal B}(o, L)$ it follows that $l_{g_0}f$ vanishes on ${\mathcal B}(o, L/2)$. In fact, if $g_1K\in {\mathcal B}(o, L/2)$ then
\beas
{\mathsf d}(eK, g_0 g_1K) &\leq& {\mathsf d}(eK, g_0K) + {\mathsf d}(g_0K, g_0g_1K) \\
&=& {\mathsf d}(eK, g_0K) + {\mathsf d}(eK, g_1K) \\
&<& \frac{L}{2} + \frac{L}{2}= L,
\eeas
that is, $g_0g_1K\in {\mathcal B}(o,L)$ for all $g_1K\in {\mathcal B}(o,L/2)$. Consequently, $\mathcal S(l_{g_0}f)$ also vanishes on the ball ${\mathcal B}(o, L/2)$, as claimed. The spherical Fourier transform of the $K$-biinvariant function $\mathcal S(l_{g_0}f)$ is given by 
\bea 
\widehat{\mathcal S\big(l_{g_0}f\big)}(\la) & = & \int_G \mathcal S\big(l_{g_0}f \big)(g)~\phi_{-\la}(g)~ dg\nonumber\\
&=& \int_G \left(\int_K (l_{g_0}f)(kg) ~ dk \right) ~ \phi_{-\la}(g)~ dg\nonumber \\
&=& \int_G f \big(g_0g \big)\phi_{-\la}(g)~ dg\nonumber\\
&=& \int_G f(g)\phi_{-\la}(g_0^{-1}g)~dg, \label{ftlghash}
\eea
using change of variable $kg \mapsto g$ and $K$-biinvariance of $\phi_{-\la}$.
Using the expression of $\phi_{-\la}(hg)$ given in Theorem \ref{philambdathm}, 6) it follows that from above that
\beas
\widehat{\mathcal S\big(l_{g_0}f \big)}(\la) &=& \int_G\int_K f(g)~e^{(i\la - \rho)\big(H(g^{-1}k) \big)}~e^{-(i\la+\rho)\big(H(g_0^{-1}k) \big)}~dk~dg\\
&=& \int_K \widetilde f(\la, k)~e^{-(i\la+\rho)\big(H(g_0^{-1}k)\big)}~dk.
\eeas
It now follows from the hypothesis (\ref{symest}) that 
\beas
&& \int_{\mathfrak a^*} \big|\widehat{\mathcal S \big(l_{g_0}f \big)}(\la) \big| ~ e^{\psi(\|\la\|_B)} ~ |{\bf c}(\la)|^{-2} ~d\la \\
&=& \int_{\mathfrak a^*}\left|\int_K  \widetilde f(\la, k)~e^{-(i\la+\rho)\big(H(g_0^{-1}k)\big)}~ dk~\right| ~ e^{\psi(\|\la\|_B)} ~|{\bf c}(\la)|^{-2} ~d\la\\
&\leq & C_{g_0} \int_{\mathfrak a^* \times K} |\widetilde f(\la, k)|~ e^{\psi(\|\la\|_B)} ~ |{\bf c}(\la)|^{-2} ~d\la ~dk
< \infty.
\eeas
That is, the nonzero $K$-biinvariant function $\mathcal S(l_{g_0}f)$ satisfies (\ref{bikest}).  By step $2$ we now conclude that $\mathcal S(l_{g_0}f)$ vanishes identically, which contradicts our hypothesis that $\mathcal S(l_{g_0}f)$ is nonzero. Hence, $f$ is zero after all and this completes the proof of part a). 

We shall now prove part (b), which can be deduced from Theorem \ref{levoriginal}, $b)$ by using the Euclidean Radon transform $\mathcal R$ and the Abel transform $\mathcal A$.  If $I$ is finite then we have 
\bes
\int_{1}^{\infty} \frac{\psi(r)}{r^2} ~ dr < \infty.
\ees
Since $\psi$ is nondecreasing, by part $b)$ of Theorem \ref{levoriginal} there exists a nontrivial $g_1 \in C_c(\R)$ with $\txt{supp}~g_1 \subseteq [-l/4,l/4]$ such that 
\bes
|\mathcal F{g_1}(\xi)|\leq Ce^{-\psi(\xi)}, \:\:\:\: \txt{ for all } \xi \in \R.
\ees
Here $\mathcal F{g_1}$ is the one-dimensional Fourier transform of $g_1$. By considering $g=g_1\ast\phi$ with a $\phi\in C_c^\infty(\R)$, $\txt{supp } \phi\subseteq [-l/4, l/4]$ we get that $g\in C_c^\infty(\R)$ with $\txt{supp } g\subseteq [-l/2,l/2]$ and 
\be\label{levb1}
|\mathcal F g(\xi)|\leq Ce^{-\psi(\xi)}, \:\:\:\: \txt{ for all } \xi\in\R.
\ee 
If $g$ turns out to be an even function then the function ${\mathcal R}^{-1}(g)= h_0$ (well defined by \ref{radonmapping}) is a nontrivial function in $C_c^\infty(\R^d)$. By  the slice projection theorem (\ref{sliceproj}), it satisfies the estimate
\bes
|\mathcal F h_0(\la)|\leq Ce^{-\psi(\la)},\:\:\:\: \txt{ for all } \la\in\R^d.
\ees 
If $g$ is not even then we consider the translate $\tilde g(x)= g(x+l/2)$. Then $\tilde g\in C_c^\infty(\R)$ with $\txt{supp } \tilde g \subseteq [-l,0]$ and hence $\tilde g$ cannot be an odd function. It follows that $\tilde g$ has a nontrivial even part given by 
\bes
\tilde g_e(x)= \frac{\tilde g(x) + \tilde g(-x)}{2}, \:\:\:\: x\in \R,
\ees 
and $\mathcal F \tilde g_e$ satisfies the estimate (\ref{levb1}). We can now consider $h_0= {\mathcal R}^{-1}(\tilde g_e)$ and argue as before. 
Therefore, if $I$ is finite and $\psi$ is nondecreasing then there exists a nontrivial radial function $h_0 \in C_c^{\infty}(\R^d)$ such that 
\be \label{h0hatest}
|\mathcal{F}{h_0}(\la)| \leq C e^{ - \psi(\|\la\|)}, \:\: \la\in \R^d. 
\ee
Since $h_0$ is a radial function on $\R^d$, it can be thought of as a $W$-invariant function on $A \cong \R^d$. So, by Theorem \ref{Abelthms}, there exists $h \in C_c^{\infty}(K \backslash G/K)$ such that ${\mathcal A}(h) = h_0$. For a nontrivial $\phi \in C_c^\infty(K\backslash G/K)$ we consider the function $f= h*\phi \in C_c^\infty(K\backslash G/K)$. Using the analogue of the slice projection theorem (Theorem \ref{Abelthms}) it follows  from the estimate (\ref{h0hatest}) that 
\bea \label{phihatestb}
&& \int_{\mathfrak{a}_+^*}|\widehat f(\la)|~ e^{\psi(\|\la\|_B)}~ |{\bf c}(\la)|^{-2}~ d\la \nonumber\\
&= & \int_{\mathfrak{a}_+^*}|\widehat h(\la)|~|\widehat \phi(\la)| ~ | e^{\psi(\|\la\|_B)}~ |{\bf c}(\la)|^{-2}~ d\la \nonumber \\
&\leq & C \int_{\mathfrak{a}_+^*}|\widehat \phi(\la)| ~ |{\bf c}(\la)|^{-2}~ d\la.
\eea
Since, $\widehat \phi \in \mathcal{H}(\mathfrak{a}^*_{\C})$, it follows from the estimate (\ref{clambdaest}) that the integral in (\ref{phihatestb}) is finite and consequently, $\widehat f$ satisfies the estimate (\ref{symest}). This completes the proof of part (b).
\end{proof} 
We now deduce Theorem \ref{boundedsymthm} from Theorem \ref{symthm}, as promised in the introduction.
\begin{proof}
[Proof of Theorem \ref{boundedsymthm}.]
As in Theorem \ref{symthm}, it suffices to prove the theorem for $f\in L^1(K\backslash G /K)$ vanishing on an open ball of the form ${\mathcal B}(o,L)$ such that $\widehat f$ satisfies the estimate
\bes
|\widehat{f}(\la)| \leq C e^{-\psi(\|\la\|_B)}, \:\:\:\: \txt{ for all } \la\in \mathfrak{a}_+^*.
\ees
We choose a nonzero $\phi \in C_c^{\infty}(K\backslash G /K)$ with $\txt{supp }\phi \subseteq {\mathcal B}(o, L/2)$ and consider the function $f* \phi $. Since $f$ vanishes on ${\mathcal B}(o, L)$ and the support of the function $\phi$ is contained in ${\mathcal B}(o, L/2)$ it follows as before that $f\ast\phi$ vanishes on ${\mathcal B}(o, L/2)$. Now,
\beas
&&\int_{\mathfrak{a}_+^*} |\widehat{f\ast\phi} (\la)|~ e^{\psi(\|\la\|_B)}~ |{\bf c}(\la)|^{-2}~ d\la \\
&=& \int_{\mathfrak{a}_+^*} |\widehat\phi(\la)|~ |\widehat f(\la)|~ e^{\psi(\|\la\|_B)}~ |{\bf c}(\la)|^{-2}~ d\la \\
&\leq & C \int_{\mathfrak{a}_+^*} |\widehat\phi(\la)| ~ |{\bf c}(\la)|^{-2}~ d\la < \infty.
\eeas
It now follows from Theorem \ref{symthm} that $f*\phi$ is zero almost everywhere. Since $\widehat{\phi}$ is nonzero almost everywhere we conclude that $\widehat f$ vanishes almost everywhere on $\mathfrak{a}^*$ and so does $f$. To prove part b) we observe that if $I$ is finite then the function $h$ constructed in the proof of Theorem \ref{symthm}, b) satisfies the estimate (\ref{Linftyest}). 
\end{proof}
It is easy to see that the method of proof of Theorem \ref{boundedsymthm} can be suitably modified to prove the following $L^p$ version of Theorem \ref{symthm}.
\begin{thm} \label{Lpsymthm}
Let $\psi$ and $I$ be as in Theorem \ref{symthm} and $1<p< \infty$.
\begin{enumerate}
\item[(a)] Suppose $f\in L^1(X)$ and its Fourier transform $\widetilde{f}$ satisfies the estimate
\be \label{symest}
\int_{\mathfrak{a}^* \times K} |\widetilde f(\la, k)|^p~ e^{\psi(\|\la\|_B)}~ |{\bf c}(\la)|^{-2}d\la~dk < \infty,
\ee
where $|{\bf c}(\lambda)|^{-2}d\lambda~dk$ denotes the Plancherel measure. If $f$ vanishes on a nonempty open set in $X$ and $I$ is infinite then $f=0$. 
\item[(b)] If $I$ is finite then there exists a nontrivial $f\in C_c^{\infty}(X)$ satisfying the estimate (\ref{symest}).
\end{enumerate}
\end{thm}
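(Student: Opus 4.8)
The plan is to reduce both parts to Theorem \ref{symthm} by convolving with a fixed, compactly supported $K$-biinvariant smooth function, exactly as Theorem \ref{boundedsymthm} was deduced from Theorem \ref{symthm}; the only new ingredient is an application of Hölder's inequality in place of the pointwise bound used there. For part (a), I would start with $f\in L^1(X)$ vanishing on a nonempty open set $U$ and satisfying (\ref{symest}). Fix $g_0K\in U$ and $r>0$ with ${\mathcal B}(g_0K,2r)\subseteq U$, and choose a nontrivial $\phi\in C_c^\infty(K\backslash G/K)$ with $\txt{supp }\phi\subseteq{\mathcal B}(o,r)$. As in Step $1$ of the proof of Theorem \ref{symthm}, $f*\phi\in L^1(X)$ vanishes on the nonempty open ball ${\mathcal B}(g_0K,r)$, and by (\ref{hsprodrln}) one has $(f*\phi\widetilde{)}(\la,k)=\widehat\phi(\la)\,\widetilde f(\la,k)$. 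Set $\psi_1=\psi/p$, which is again increasing with $\psi_1(r)\to\infty$, and let $q$ be the exponent conjugate to $p$. Hölder's inequality then gives
\beas
&& \int_{\mathfrak{a}^*\times K}|\widehat\phi(\la)\,\widetilde f(\la,k)|\, e^{\psi_1(\|\la\|_B)}\,|{\bf c}(\la)|^{-2}\,d\la\,dk \\
&\leq & \left(\int_{\mathfrak{a}^*\times K}|\widetilde f(\la,k)|^p\, e^{\psi(\|\la\|_B)}\,|{\bf c}(\la)|^{-2}\,d\la\,dk\right)^{1/p} \\
&& \times\left(\int_{\mathfrak{a}^*}|\widehat\phi(\la)|^q\,|{\bf c}(\la)|^{-2}\,d\la\right)^{1/q},
\eeas
and both factors are finite: the first by (\ref{symest}), and the second because $\widehat\phi\in\mathcal{H}(\mathfrak{a}_{\C}^*)_W$ decays faster than every polynomial on $\mathfrak{a}^*$ (Theorem \ref{PWsperical}) while $|{\bf c}(\la)|^{-2}$ grows at most polynomially by (\ref{clambdaest}).

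Since the integral $I_1$ attached to $\psi_1=\psi/p$ is exactly $I/p$, it is still infinite, so Theorem \ref{symthm}(a) applies to $f*\phi$ with $\psi$ replaced by $\psi_1$ and yields $f*\phi=0$. As $\widehat\phi$ is a nontrivial real-analytic function on $\mathfrak{a}^*$, hence nonzero almost everywhere, it follows that $\widetilde f=0$ almost everywhere on $\mathfrak{a}^*\times K$, whence $f=0$, just as at the end of Step $1$ of the proof of Theorem \ref{symthm}. For part (b), I claim the very function $f=h*\phi\in C_c^\infty(K\backslash G/K)$ produced in the proof of Theorem \ref{symthm}(b) already works: there $|\widehat f(\la)|\leq C e^{-\psi(\|\la\|_B)}|\widehat\phi(\la)|$, so $|\widehat f(\la)|^p e^{\psi(\|\la\|_B)}\leq C^p e^{-(p-1)\psi(\|\la\|_B)}|\widehat\phi(\la)|^p\leq C^p|\widehat\phi(\la)|^p$ because $p>1$, and the rapid decay of $\widehat\phi$ together with (\ref{clambdaest}) makes $\int_{\mathfrak{a}^*}|\widehat\phi(\la)|^p|{\bf c}(\la)|^{-2}\,d\la$ finite, which is (\ref{symest}).

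The one step that needs a moment's thought --- and the reason the argument is only ``easy'' rather than completely automatic --- is the choice $\psi_1=\psi/p$ rather than $\psi$ itself. With this choice the factor coming from $\phi$ in the Hölder splitting carries no exponential weight at all and only has to overcome the polynomial growth of the Plancherel density, which it does by Paley--Wiener; meanwhile the divergence condition $I=\infty$ is inherited by $I_1=I/p$. Had one insisted on keeping $\psi$ throughout, one would be forced to demand that $\widehat\phi$ decay faster than $e^{-c\psi}$, which is impossible for a compactly supported $\phi$ once $\psi$ grows super-polynomially; so the mild redistribution of the exponential weight between the two Hölder factors is the crucial, if elementary, observation.
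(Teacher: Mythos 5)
Your proposal is correct, and it is essentially the ``suitable modification of the method of proof of Theorem \ref{boundedsymthm}'' that the paper claims but does not spell out. The one genuinely new step you supply is exactly the right one: replacing $\psi$ by $\psi_1=\psi/p$ before invoking Theorem \ref{symthm}(a), so that H\"older's inequality splits the weight as $e^{\psi_1}=e^{\psi/p}\cdot 1$ and the $\widehat\phi$-factor only has to beat the polynomial growth of $|{\bf c}(\la)|^{-2}$ (which Paley--Wiener guarantees), while the divergence $I_1=I/p=\infty$ is preserved. The small difference from the paper's template is cosmetic: the proof of Theorem \ref{boundedsymthm} first quotes the reduction to the $K$-biinvariant case from Step $3$ of Theorem \ref{symthm} and only then convolves, whereas you convolve first and feed $f*\phi$ directly into the full statement of Theorem \ref{symthm}(a); both routes are valid since that theorem already contains the $K$-biinvariance reduction. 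Your part (b) argument, bounding $|\widehat f|^pe^{\psi}\leq C^p e^{-(p-1)\psi}|\widehat\phi|^p\leq C^p|\widehat\phi|^p$ using $p>1$ and $\psi\geq 0$, is also correct and matches the paper's final observation that the $h*\phi$ produced in Theorem \ref{symthm}(b) does the job.
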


\begin{rem}\label{finalrem}
\begin{enumerate}
\item It is not hard to see that part $a)$ of Theorem \ref{symthm} remains true if the integral $I$ is replaced by the integral
\bes
\int_{\{\la\in \mathfrak a_+^*\mid\: \|\la\|_B\geq 1\}}\frac{\psi(\|\la\|_B)}{\|\la\|_B^{\eta+1}}|{\bf c}(\la )|^{-2}d\la,
\ees
where $\eta=d+\text{dim }\mathfrak n$, is the dimension of the symmetric space $X$. This follows from the estimate ( \ref{clambdaest}) 
of $|{\bf c}(\la)|^{-2}$ as
\beas
\int_{1}^{\infty} \frac{\psi(r)}{r^2}  dr &=& C\int_{\{\la\in \mathfrak a_+^*\mid\:\|\la\|_B\geq 1\}}\frac{\psi(\|\la\|_B)}{\|\la\|_B^{d+1}}d\la\\
&=& C\int_{\{\la\in \mathfrak a_+^*\mid\: \|\la\|_B\geq 1\}}\frac{\psi(\|\la\|_B)}{\|\la\|_B^{\text{dim}X+1}}\|\la\|_B^{\text{dim }\mathfrak n}d\la\\
&\geq & C\int_{\{\la\in \mathfrak a_+^*\mid\: \|\la\|_B\geq 1\}}\frac{\psi(\|\la\|_B)}{\|\la\|_B^{\text{dim}X+1}}|{\bf c}(\la )|^{-2}d\la=\infty.
\eeas
Moreover, because of the estimate (\ref{clambdaestone}), part $b)$ of Theorem \ref{symthm} also remains true in this case if $\text{rank}(X)=1$.
\item If $\psi(r)=r^2$ or $r$ then one may appeal to a result of Kotake and Narasimhan \cite{KN} to conclude that $f$ is real analytic. However, the same line argument does not seem to work for more general $\psi$ as in Theorem \ref{boundedsymthm}. For example, it follows from Theorem \ref{boundedsymthm} that if the spherical Fourier transform of a nonzero function $f\in L^1(K\backslash G /K)$ satisfies the estimate
\be\label{exampl}
|\widehat{f}(\la)|\leq Ce^{-\frac{\|\la\|_B}{1+\log (\|\la\|_B)}},\:\:\:\:\:\:\text{for all }\|\la\|_B\geq 1,\:\la\in \mathfrak a_+^*,
\ee
then $f$ cannot vanish on a nonempty open subset of $X$. However, there exists a nonzero $f\in C_c^{\infty}(K\backslash G /K )$ such that
\bes
|\widehat{f}(\la)|\leq Ce^{-\frac{\|\la\|_B}{(1+\log (\|\la\|_B))^2}},\:\:\:\:\:\:\text{for all }\|\la\|_B\geq 1,\:\la\in \mathfrak a_+^*.
\ees
It is not known at the moment whether there exists a nonzero function $f\in L^1(K\backslash G /K)$ satisfying (\ref{exampl}) which vanishes on a positive measure subset of $X$.
   
\item One cannot fail to observe that the exponential volume growth of the Riemannian symmetric space $X$ of noncompact type does not play any role in Theorem \ref{symthm}. The reason seems to be that the dual $\mathfrak a_+^*\times K$ is essentially of polynomial growth. In view of this, the following seems to be an interesting question: can we characterize the nonnegative functions $\psi$ for which there exists a nonzero $f\in L^2(K\backslash G /K)$ such that
\bes
|f(x)|\leq Ce^{-\psi({\mathsf d}(o,x))},\:\:\:\:\:\:x\in X
\ees 
but $\hat{f}$ vanishes on a nonempty open subset of $\mathfrak a_+^*$?
\item It would be interesting to see whether results analogous to Theorem \ref{symthm} can be proved in other contexts as well (see \cite{ACDS, NPP, Sa}). 
\end{enumerate}
\end{rem} 

\vspace{.2cm}

\noindent{\bf Acknowledgement:} This work was supported by Indian Statistical Institute, India (Research fellowship to Mithun Bhowmik). The authors are  thankful to  Suparna Sen for numerous useful discussions. The authors are also grateful to the referee for detailed comments and valuable suggestions for the improvement of the paper.

\end{document}